\newtheorem{thm}{Theorem}[section]
\newtheorem{lemma}[thm]{Lemma}
\newtheorem{prop}[thm]{Proposition}
\newtheorem{remark}[thm]{Remark}
\date{}
\title{A Note on the Malliavin Differentiability of
One-Dimensional Reflected Stochastic Differential Equations with
Discontinuous Drift}
\author{ Torstein Nilssen \thanks{Department of Mathematics, University of Oslo, Moltke Moes vei 35, P.O. Box 1053 Blindern, 0316 Oslo, Norway.
E-mail: torsteka@math.uio.no. 
Funded by  Norwegian Research Council (Project 230448/F20).} 
 , 
Tusheng Zhang \thanks{School of Mathematics, University of Manchester, Oxford Road, Manchester M13 9PL, England. 
E-mail: tzhang@maths.man.ac.uk . }}
\begin{document}

\maketitle

\begin{abstract}
We consider a one-dimensional Stochastic Differential Equation with reflection where we allow the drift to be merely bounded and measurable. It is already known that such equations have a unique strong solution, see \cite{Tusheng94}. In \cite{PMNPZ} and \cite{MNP} it is shown that non-reflected SDE's with discontinuous drift possess more regularity than one could expect, namely they are Malliavin differentiable and weakly differentiable w.r.t. the initial value. See also \cite{FedrizziFlandoli} for a different technique. 

In this paper we use the approach of \cite{PMNPZ} and \cite{MNP} to show that similar results hold for one-dimensional SDE's with reflection. We then apply the results to get a Bismut-Elworthy-Li formula for the corresponding Kolmogorov equation.

\end{abstract}

\section{Introduction and Main Result}
Let $(\Omega, \mathcal{F}, P)$ be a complete probability space
with a filtration $\{\mathcal{F}_t\}$ satisfying the usual conditions. Let
$B_t$ be a standard $\mathcal{F}_t$-Brownian motion on the
space.

We consider a stochastic differential equation with
reflecting boundary:
\begin{equation} \label{mainSDE}
\left\{ \begin{array}{ll}
dX_t & = b(X_t)dt + \sigma(X_t)dB_t + dL_t \\
X_t & \geq 0 \\
L_t & = 1_{\{0\}}(X_t) dL_t ,\\
\end{array}
\right.
\end{equation}

where $b:\mathbb{R} \rightarrow \mathbb{R}$ is a bounded and
measurable function and $\sigma: \mathbb{R} \rightarrow \mathbb{R}$ is a continuously differential bounded function, bounded away from zero. This equation has a unique strong solution
as proved in \cite{Tusheng94}, namely there exists a pair
$(X,L)$ of processes such that
\begin{itemize}
\item
$X_t$ is $\mathcal{F}_t$-adapted, $X_t \geq 0$ for all $t$.

\item
$L_t$ is $\mathcal{F}_t$-adapted, continuous, non-decreasing and
such that
$$
L_0 = 0 , \, \, L_t = \int_0^t 1_{\{0\}}(X_s) dL_s,
$$

\item
$X_t = x + \int_0^t b(X_s)ds+ \int_0^t \sigma(X_s) B_s + L_t , \, \, P-a.s.$

\end{itemize}

We call $x \geq 0$ the initial value of the equation.

For simplicity, we shall consider the equation defined on $t \in [0,1]$.

The aim of this paper is to show the following

\begin{thm} \label{MalliavinThm}
Assume $b$ is bounded and measurable, $\sigma \in C_b^1(\mathbb{R})$ and there exists $\delta >0$ such that $|\sigma(x)| \geq \delta$ for all $x$.

Then the strong solution to (\ref{mainSDE}) is Malliavin-differentiable, i.e. for a fixed $t \in [0,1]$, we have $X_t \in \mathbb{D}^{1,2}$.
\end{thm}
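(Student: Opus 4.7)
The plan is to adapt the approximation strategy of \cite{PMNPZ} and \cite{MNP} to the reflected setting. Choose $b_n \in C_b^\infty(\mathbb{R})$ with $\|b_n\|_\infty \le \|b\|_\infty$ and $b_n \to b$ a.e., and let $(X^n, L^n)$ denote the strong solution of (\ref{mainSDE}) with drift $b_n$. Since $b_n$ and $\sigma$ are smooth, the usual Malliavin chain rule combined with the Lipschitz continuity of the Skorokhod reflection map $\Gamma$ yields $X^n_t \in \mathbb{D}^{1,2}$. Writing $X^n = \Gamma(Y^n)$ with $Y^n_t = x + \int_0^t b_n(X^n_s)\,ds + \int_0^t \sigma(X^n_s)\,dB_s$, one gets for $s \le t$ a representation
\[
D_s X^n_t \;=\; \sigma(X^n_s)\,J^n_{s,t} \;-\; \sigma(X^n_{\tau^n_s(t)})\,J^n_{\tau^n_s(t),t}\,\mathbf{1}_{\{\tau^n_s(t) > s\}},
\]
where $\tau^n_s(t) := \sup\{r\in[s,t]: X^n_r = 0\}$ and $J^n_{s,t}$ is the exponential tangent flow of the SDE, involving $b_n'$ and $\sigma'$.

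The technical heart of the argument is the uniform bound
\[
\sup_n\, \mathbb{E}\!\int_0^t (D_s X^n_t)^2\,ds \;<\; \infty,
\]
which must be independent of $\|b_n'\|_\infty$. Following \cite{PMNPZ}, introduce the Girsanov change of measure $dQ^n/dP = \mathcal{E}\bigl(-\int_0^\cdot \sigma^{-1}(X^n_s)\,b_n(X^n_s)\,dB_s\bigr)_1$, which is well defined because $b$ is bounded and $\sigma^{-1}$ is bounded. Under $Q^n$ the process $X^n$ solves a driftless reflected SDE with diffusion $\sigma$, and hence has a law independent of $b_n$. Moments of $J^n_{s,t}$ under $P$ then reduce, after Cauchy--Schwarz, to moments under $Q^n$ of $\exp\!\bigl(p\int_s^t b_n'(X^n_r)\,dr\bigr)$. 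An It\^o--Tanaka argument applied to a primitive of $b_n$ replaces the distributional factor $b_n'$ by expressions involving only $b_n$ itself and the local time $L^n$ at zero. These can be controlled uniformly by a Krylov-type occupation-density estimate together with moment bounds on $L^n_t$, both available in the reflected setting from \cite{Tusheng94}.

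With the uniform $\mathbb{D}^{1,2}$-bound in hand, strong $L^2$ convergence $X^n_t \to X_t$ follows from pathwise uniqueness for (\ref{mainSDE}) via a standard stability argument. The closability of the Malliavin derivative operator then gives $X_t \in \mathbb{D}^{1,2}$, with $D X^n_t$ converging weakly to $DX_t$ in $L^2(\Omega \times [0,1])$.

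The principal obstacle is the It\^o--Tanaka step: applying It\^o's formula to a primitive of $b_n$ along the reflected process $X^n$ produces, in contrast to the non-reflected case of \cite{PMNPZ}, a boundary contribution proportional to $L^n_t$ which must be absorbed into the estimate. The requisite Krylov bound itself must also be verified in the reflected setting. Both steps rely crucially on the one-dimensionality of the problem and on the uniform lower bound $|\sigma| \ge \delta$, which keeps the diffusion non-degenerate near the reflecting boundary.
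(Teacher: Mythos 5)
Your proposal shares the paper's overall architecture --- approximate $b$ by smooth drifts with uniformly bounded sup-norm, prove a uniform $\mathbb{D}^{1,2}$-bound that does not see $\|b_n'\|_\infty$, then conclude by closability of the Malliavin derivative --- but the route to the key estimate is genuinely different. The paper bounds $E[\exp\{4\int_\theta^t b_n'(X_s)\,ds\}]$ by expanding the moments $E(\int_\theta^t b_n'(X_s)\,ds)^k$ \`a la Davie, integrating by parts against the Neumann transition density and invoking Gaussian kernel estimates plus a $T(1)$ theorem; you propose Girsanov followed by an It\^o--Tanaka transformation of a primitive of $b_n$, trading $b_n'$ for $b_n$ itself, a bounded stochastic integral, and a local-time boundary term. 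That is a legitimate and arguably more elementary alternative (it avoids the heat-kernel machinery), provided you actually supply the exponential integrability of $L_t$ (available from the explicit Skorohod formula) and a Krylov-type estimate for the reflected diffusion --- neither is in \cite{Tusheng94}, so both need independent proofs. Note also that the paper does not go through the Skorohod map at all for the smooth-drift case: it uses a penalization $\epsilon^{-1}h_n(X^{n,\epsilon})$ with $h_n'\leq 0$, which simultaneously proves $X_t\in\mathbb{D}^{1,2}$ and shows that the reflection can only decrease the derivative. Your representation of $D_sX^n_t$ through $\tau^n_s(t)$ is asserted rather than derived, and ``chain rule plus Lipschitz continuity of $\Gamma$'' is not a routine justification of $X^n_t\in\mathbb{D}^{1,2}$, since the reflected solution is a fixed point of a map involving $\Gamma$, not a composition $\Gamma(Y)$ with $Y$ known a priori to be differentiable.

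The genuine gap is the final convergence step. For $b$ merely bounded and measurable, the claim that ``strong $L^2$ convergence $X^n_t\to X_t$ follows from pathwise uniqueness via a standard stability argument'' does not hold: the Gronwall-type stability estimate requires controlling $E\int_0^t|b(X^n_s)-b(X_s)|^2\,ds$, and since $b$ is not continuous, a.e. convergence $b_n\to b$ together with uniform bounds does not by itself force convergence of the solutions. The paper resolves exactly this point by constructing a specific monotone approximation $b_{n,k}=\bigwedge_{j=1}^k b_j$ (Lipschitz, vanishing at $0$, decreasing in $k$, with infima increasing in $n$) and invoking the comparison theorem for one-dimensional reflected SDEs, so that $(X^{n,k},L^{n,k})$ converges monotonically and hence in $L^2$. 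Without this device --- or a genuine substitute such as tightness plus identification of the limit drift via a uniform Krylov estimate and a Yamada--Watanabe argument --- the closability step has no convergent sequence to act on, and the proof does not close.
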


The outline of this paper is as follows:
The rest of this section is devoted to the proof of Theorem \ref{MalliavinThm}.
In Section \ref{spatialRegularity} we study how the solution depends on the initial value $x \geq 0$. We then use this to study the corresponding Kolmogorov equation and obtain a Bismut-Elworthy-Li formula in Section \ref{bismutFormula}.
Section \ref{skorohodEquation} is the Appendix where we include an approximation of the Skorohod equation.

\bigskip

We return to the proof of Theorem \ref{MalliavinThm} which is divided into three steps. In the two first steps we
consider (\ref{mainSDE}) with a drift $b \in C^1_b(\mathbb{R})$ such that $b(0) = 0$.
In \emph{step 1} we introduce an approximation of the solution in terms of an ordinary SDE, i.e. not reflected. 

In \emph{step 2} we use the approximation from step 1 to find bounds on the Malliavin derivative which are not depending on $b'$. 

In \emph{step 3} we consider a general $b$ and construct an approximation of the solution such that the sequence of Malliavin derivatives are bounded uniformly.

\bigskip

{\bf  Step 1}

The function $(\cdot)^- :\mathbb{R} \rightarrow \mathbb{R}$ is
defined by
\begin{equation*}
(y)^- = \left\{
\begin{array}{ll}
-y & \textrm{ if } y < 0\\
0 & \textrm{ if } y \geq 0  .\\
\end{array} \right.
\end{equation*}

Let $\rho \in C^{\infty}(\mathbb{R})$ be a positive function
such that supp$\{\rho\} \subset (-1,1)$ and $\int \rho(z) dz
=1$. Define $\rho_n(z) = n \rho(nz)$ and let
$$
h_n(y) = \int \rho_n(y-z) (z)^- dz . 
$$
It is readily checked that $h_n \in C^{\infty}(\mathbb{R})$,
$h_n'(z) \leq 0$ and $h_n \rightarrow (\cdot)^-$ almost
everywhere.  Then there exists a unique strong solution to the SDE
$$
X^{n, \epsilon}_t = x + \int_0^t b(X^{n,\epsilon}_s) +
\frac{1}{\epsilon} h_n (X_s^{n, \epsilon}) ds+ \int_0^t \sigma(X_s^{n,\epsilon}) dB_s .
$$
As $n \rightarrow \infty$ it is easy to see that  $X_t^{n, \epsilon}
\rightarrow X_t^{ \epsilon }$ in $L^2(\Omega)$, where
$$
X^{ \epsilon}_t = x + \int_0^t b(X^{\epsilon}_s) +
\frac{1}{\epsilon} (X_s^{ \epsilon})^- ds+ \int_0^t \sigma(X_s^{\epsilon}) B_s .
$$

The following lemma is a classical result. We include a proof for the sake for self-containdness.

\begin{lemma}
As $ \epsilon \rightarrow 0$, we get $(X_t^{\epsilon}, \epsilon^{-1}\int_0^t (X_s^{\epsilon})^- ds) \rightarrow (X_t, L_t)$ - the solution to (\ref{mainSDE}).
\end{lemma}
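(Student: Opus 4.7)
The plan is the standard penalization argument: derive $\epsilon$-uniform a priori bounds, establish tightness of $(X^{\epsilon}, L^{\epsilon})$ with $L^{\epsilon}_t := \epsilon^{-1}\int_0^t (X_s^{\epsilon})^- \, ds$, extract a subsequential limit, identify it as a solution of the reflected SDE (\ref{mainSDE}), and conclude via pathwise uniqueness of the limiting equation (which is classical here since $b \in C^1_b$ with $b(0)=0$ and $\sigma \in C^1_b$ is non-degenerate).

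The quantitative core is an It\^o-Tanaka computation with $\phi(x) = ((x)^-)^2/2$, for which $\phi'(x) = -(x)^-$ and $\phi''(x) = 1_{\{x<0\}}$. Applied to $X^{\epsilon}_t$ and rearranged, it reads
\[
\tfrac{1}{2}((X_t^{\epsilon})^-)^2 + \tfrac{1}{\epsilon}\int_0^t ((X_s^{\epsilon})^-)^2 \, ds = -\int_0^t (X_s^{\epsilon})^- b(X_s^{\epsilon})\, ds - \int_0^t (X_s^{\epsilon})^- \sigma(X_s^{\epsilon})\, dB_s + \tfrac{1}{2}\int_0^t 1_{\{X_s^{\epsilon}<0\}} \sigma^2(X_s^{\epsilon})\, ds.
\]
Since $b(0)=0$ and $b \in C^1_b$, one has $(X_s^{\epsilon})^- |b(X_s^{\epsilon})| \leq \|b'\|_\infty ((X_s^{\epsilon})^-)^2$; taking expectation and absorbing this into the left-hand side (valid for small $\epsilon$) yields $E\int_0^1 ((X_s^{\epsilon})^-)^2\, ds = O(\epsilon)$, whence $(X^{\epsilon})^- \to 0$ in $L^2([0,1]\times \Omega)$. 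A standard It\^o-BDG-Gronwall argument for $(X_t^{\epsilon})^2$---where the penalty contribution $-(2/\epsilon)((X^{\epsilon})^-)^2 \leq 0$ is favorable and drops out---then gives $\sup_{\epsilon} E[\sup_{t \leq 1}(X_t^{\epsilon})^2] < \infty$, and hence $\sup_{\epsilon} E[(L_1^{\epsilon})^2] < \infty$ from the SDE.

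Each $L^{\epsilon}$ is continuous and non-decreasing with $L_0^{\epsilon}=0$ and $L_1^{\epsilon}$ bounded in $L^2$, so $\{L^{\epsilon}\}$ is tight in $C([0,1])$; tightness of $\{X^{\epsilon}\}$ follows from the SDE together with standard increment estimates. Along a subsequence, $(X^{\epsilon_k}, L^{\epsilon_k}, B)$ converges in law, and by Skorohod representation we may assume almost sure convergence on a new probability space to some triple $(\tilde X, \tilde L, \tilde B)$. Continuous mapping plus convergence of stochastic integrals against the Lipschitz $\sigma$ gives $\tilde X = x + \int_0^\cdot b(\tilde X_s)\, ds + \int_0^\cdot \sigma(\tilde X_s)\, d\tilde B_s + \tilde L$; the $L^2$-smallness of $(X^{\epsilon})^-$ forces $\tilde X \geq 0$; and $\tilde L$ inherits continuity, monotonicity, and $\tilde L_0 = 0$. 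The main obstacle is the support condition $\int_0^t \tilde X_s\, d\tilde L_s = 0$: this is obtained from the pointwise identity $\int_0^t (X_s^{\epsilon})^+\, dL_s^{\epsilon} = 0$, valid for every $\epsilon$ because $(X^{\epsilon})^+(X^{\epsilon})^- \equiv 0$; passing to the limit using uniform convergence of $X^{\epsilon}$ together with weak convergence of the Stieltjes measures $dL^{\epsilon} \to d\tilde L$ yields $\int_0^t \tilde X_s^+\, d\tilde L_s = 0$, which combined with $\tilde X \geq 0$ is the required condition. Pathwise uniqueness of the reflected SDE identifies $(\tilde X, \tilde L)$ with $(X, L)$, so the entire family $(X^{\epsilon}, L^{\epsilon})$ converges.
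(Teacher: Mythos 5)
Your overall scheme (penalization estimates, tightness, identification of the limit, pathwise uniqueness) is the classical Lions--Sznitman/Menaldi route and is genuinely different from the paper's argument: the paper uses the comparison principle to get \emph{monotone} pointwise convergence of $X^{\epsilon}$, and then couples $X^{\epsilon}$ with the penalized equation $Y^{\epsilon}$ driven by the frozen path $\int_0^{\cdot}b(X_s)\,ds+\int_0^{\cdot}\sigma(X_s)\,dB_s$, so that all the work of controlling the penalty terms is delegated to the deterministic Skorohod approximation (Proposition \ref{SkorohodApproximation}). Within your scheme, the It\^o--Tanaka estimate $E\int_0^1((X_s^{\epsilon})^-)^2\,ds=O(\epsilon)$ and the uniform second-moment bounds are correct, as is the passage to the limit in the support condition once uniform convergence is available.

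The gap is the tightness step, which is in fact the heart of the matter. From ``$L^{\epsilon}$ is continuous, non-decreasing, $L_0^{\epsilon}=0$, and $L_1^{\epsilon}$ is bounded in $L^2$'' you cannot conclude tightness in $C([0,1])$: non-decreasing functions with a bounded endpoint can concentrate all of their increase on arbitrarily short intervals (think of smoothed step functions), so this only gives tightness of the measures $dL^{\epsilon}$, not the equicontinuity needed for the uniform topology. Your fallback --- ``tightness of $X^{\epsilon}$ follows from the SDE together with standard increment estimates'' --- is circular: the increments of $X^{\epsilon}$ contain the penalty term $\epsilon^{-1}\int_s^t(X_u^{\epsilon})^-\,du=L_t^{\epsilon}-L_s^{\epsilon}$, so a modulus of continuity for $X^{\epsilon}$ presupposes one for $L^{\epsilon}$ and vice versa. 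Breaking this circle requires a genuine additional input, e.g.\ an estimate of the form $E|L_t^{\epsilon}-L_s^{\epsilon}|^4\le C|t-s|^2$ uniformly in $\epsilon$ (obtained in the classical literature by excursion or stopping-time arguments), or --- as in the paper --- the uniform-in-$\epsilon$ Lipschitz continuity of the penalized Skorohod map (Lemma \ref{DeterministicExtension} and Proposition \ref{SkorohodApproximation}), which transfers the modulus of continuity of the driving path directly to $f^{\epsilon}$. A secondary, more routine omission: your limit is constructed on a new probability space, so recovering convergence on the original space from pathwise uniqueness requires a Gy\"ongy--Krylov type argument; this is standard but should be stated.
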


\begin{proof}
By the comparison principle, we note that there exists a subset with full measure $\Omega_0 \subset \Omega$ such that $\{X_t^{\epsilon}(\omega)\}$ is increasing as $\epsilon \rightarrow 0$ for all $(t,\omega) \in [0,1] \times \Omega_0$.
We may define
$$
X_t(\omega) = \lim_{\epsilon \rightarrow 0} X_t^{\epsilon}(\omega)
$$
for $\omega \in \Omega_0$ and $0$ otherwise. 

By It\^{o}'s formula we have
\begin{align} \label{approxSquare}
(X_t^{\epsilon})^2 & = x^2 + \int_0^t 2X_s^{\epsilon} (\epsilon^{-1} (X_s^{\epsilon})^- +  b(X_s)) + \sigma^2(X_s^{\epsilon}) ds + \int_0^t 2X_s^{\epsilon} \sigma(X_s^{\epsilon}) dB_s \\
\notag & \leq x^2 + \int_0^t 2X_s^{\epsilon}  b(X_s) + \sigma^2(X_s^{\epsilon}) ds + \int_0^t 2X_s^{\epsilon} \sigma(X_s^{\epsilon}) dB_s 
\end{align}
and taking expectation yields
\begin{align*}
E[(X_t^{\epsilon})^2] & \leq x^2 + \int_0^t E[ 2X_s^{\epsilon}  b(X_s)] + E[\sigma^2(X_s^{\epsilon})] ds \\
 & \leq x^2 + \|b\|^2_{\infty}t + \|\sigma\|^2_{\infty}t +  \int_0^t E[(X_s^{\epsilon})^2] ds \\
 & \leq (x^2 + \|b\|^2_{\infty}t + \|\sigma\|^2_{\infty}t) e^t \\
\end{align*}
where we have used the inequality $2 ab \leq a^2 + b^2$ and Gronwall's inequality.

It follows by Fatou's lemma that $X_t$ is $P$-a.s. finite.

Define $Y_t^{\epsilon}$ to be the solution of
$$
dY_t^{\epsilon} = \epsilon^{-1} (Y_t^{\epsilon})^- dt + b(X_t)dt + \sigma(X_t)dB_t, \, \, Y_0^{\epsilon} = x .
$$
From Proposition \ref{SkorohodApproximation} we get that on the subset on which $\int_0^{\cdot} b(X_s)ds + \int_0^{\cdot} \sigma(X_s)dB_s$ is continuous, we have that $Y^{\epsilon}$ (respectively $\epsilon^{-1} \int_0^{\cdot} (Y_s^{\epsilon})^- ds$) converges to $Y$ (respectively $\phi$) in $C([0,1])$ which is the solution to the Skorohod equation.

We have by It\^{o}'s formula,

\begin{align*}
(X_t^{\epsilon} - Y_t^{\epsilon})^2 & = \frac{2}{\epsilon} \int_0^t \left( (X_s^{\epsilon})^- - (Y_s^{\epsilon})^- \right) \left( X_s^{\epsilon} - Y_s^{\epsilon}\right) ds  \\
& + 2 \int_0^t \left( X_s^{\epsilon} - Y_s^{\epsilon}\right) \left( b(X_s^{\epsilon}) - b(X_s) \right) ds \\
& + 2 \int_0^t \left( X_s^{\epsilon} - Y_s^{\epsilon}\right) \left( \sigma(X_s^{\epsilon}) - \sigma(X_s) \right) dB_s \\
& + \int_0^t \left( \sigma(X_s^{\epsilon}) - \sigma(X_s) \right)^2 ds .\\ 
\end{align*}

The first term above is always negative. Taking expectation we get :
\begin{align*}
E[ (X_t^{\epsilon} - Y_t^{\epsilon})^2 ] & \leq 2 \int_0^t E |\left( X_s^{\epsilon} - Y_s^{\epsilon}\right) \left( b(X_s^{\epsilon}) - b(X_s) \right)| ds \\
& + \int_0^t E[ \left( \sigma(X_s^{\epsilon}) - \sigma(X_s) \right)^2 ]ds \\ 
& \leq   \int_0^t E [\left( X_s^{\epsilon} - Y_s^{\epsilon}\right)^2] ds + \int_0^t E[\left( b(X_s^{\epsilon}) - b(X_s) \right)^2] ds \\
& + \int_0^t E[ \left( \sigma(X_s^{\epsilon}) - \sigma(X_s) \right)^2] ds \\ 
& \leq   e^t \left(  \int_0^t E[\left( b(X_s^{\epsilon}) - b(X_s) \right)^2] ds  + \int_0^t E[ \left( \sigma(X_s^{\epsilon}) - \sigma(X_s) \right)^2] ds \right).\\ 
\end{align*}

Above we have used Gronwalls lemma in the last inequality. As $\epsilon \rightarrow 0$ the above goes to zero, and we see that $Y_t^{\epsilon} \rightarrow X_t$ $P$-a.s. for all $t \in [0,1]$. 
Using the Burkholder-Davies-Gundy inequality, one can show that this convergence actually takes place in $L^2(\Omega; C([0,1]))$.

It follows from Proposition \ref{SkorohodApproximation} that $X_t$ is continuous and that $\epsilon^{-1} \int_0^{\cdot} (X_s^{\epsilon})^- ds$ converges to $L_{\cdot}$ where $(X,L)$ is a solution (\ref{mainSDE}).

\end{proof}

{\bf Step 2 }

We have the following estimate on the Malliavin derivatives:

\begin{lemma} \label{firstEstimateLemma}
For fixed $t \geq 0$ we have $X_t \in \mathbb{D}^{1,2}$ and there exists an increasing function $K_1 : \mathbb{R}_+ \rightarrow \mathbb{R}_+$ such that the
Malliavin derivative satisfies
$$
E[(D_{\theta}X_t)^2] \leq K_1(\| \sigma \|_{C^1_b}) \left(E[ \exp \{ 4\int_{\theta}^t b'(X_s)ds \}] \right)^{1/2}.
$$
\end{lemma}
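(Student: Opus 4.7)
The plan is to work at the level of the doubly regularised approximation $X^{n,\epsilon}$ (with both $b+\epsilon^{-1}h_n$ and $\sigma$ in $C^1_b$), produce an explicit representation of the Malliavin derivative, extract an estimate that is uniform in both $n$ and $\epsilon$, and then transfer this estimate to $X_t$ by the closability of $D$.

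\textbf{Step A (explicit derivative of the smooth approximation).} Since $b+\epsilon^{-1}h_n$ and $\sigma$ are in $C^1_b$, standard Malliavin calculus for SDEs gives $X^{n,\epsilon}_t\in\mathbb{D}^{1,2}$ and, for $\theta\le t$, the linear equation
\[
D_\theta X^{n,\epsilon}_t = \sigma(X^{n,\epsilon}_\theta) + \int_\theta^t \bigl(b'(X^{n,\epsilon}_s)+\epsilon^{-1}h_n'(X^{n,\epsilon}_s)\bigr)D_\theta X^{n,\epsilon}_s\,ds + \int_\theta^t \sigma'(X^{n,\epsilon}_s)D_\theta X^{n,\epsilon}_s\,dB_s,
\]
whose explicit solution is $D_\theta X^{n,\epsilon}_t = \sigma(X^{n,\epsilon}_\theta)\exp\{A^{n,\epsilon}_{\theta,t}\}$, with
\[
A^{n,\epsilon}_{\theta,t} = \int_\theta^t\!\bigl(b'(X^{n,\epsilon}_s)+\epsilon^{-1}h_n'(X^{n,\epsilon}_s)-\tfrac12\sigma'(X^{n,\epsilon}_s)^2\bigr)ds + \int_\theta^t \sigma'(X^{n,\epsilon}_s)dB_s.
\]

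\textbf{Step B (uniform estimate).} Squaring and using the crucial sign condition $h_n'\le 0$, the $\epsilon^{-1}$ term drops:
\[
(D_\theta X^{n,\epsilon}_t)^2 \le \sigma(X^{n,\epsilon}_\theta)^2\,\exp\!\Bigl\{2\!\int_\theta^t\! b'(X^{n,\epsilon}_s)ds\Bigr\}\cdot\exp\!\Bigl\{2\!\int_\theta^t\!\sigma'(X^{n,\epsilon}_s)dB_s - \!\int_\theta^t\!\sigma'(X^{n,\epsilon}_s)^2 ds\Bigr\}.
\]
The last factor equals $e^{\int_\theta^t\sigma'(X^{n,\epsilon}_s)^2 ds}\cdot\mathcal{E}_t$, where $\mathcal{E}_t$ is the Dol\'eans--Dade exponential of $2\!\int\sigma'(X^{n,\epsilon})dB$. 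The deterministic piece is bounded by $e^{\|\sigma'\|_\infty^2}$, and Novikov applied to $4\!\int\sigma'(X^{n,\epsilon})dB$ gives $E[\mathcal{E}_t^2]\le e^{4\|\sigma'\|_\infty^2}$. A Cauchy--Schwarz split of the two remaining random factors then yields
\[
E[(D_\theta X^{n,\epsilon}_t)^2] \le K_1(\|\sigma\|_{C^1_b})\,\bigl(E[\exp\{4\!\int_\theta^t b'(X^{n,\epsilon}_s)ds\}]\bigr)^{1/2},
\]
with $K_1$ independent of $n$ and $\epsilon$.

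\textbf{Step C (passage to the limit).} Since $b'$ is bounded the right-hand side is dominated by $K_1 e^{2\|b'\|_\infty}$, so the family $\{D X^{n,\epsilon}_t\}$ is bounded in $L^2(\Omega\times[0,1])$. Combined with the $L^2$-convergence $X^{n,\epsilon}_t\to X^\epsilon_t$ (as $n\to\infty$) and $X^\epsilon_t\to X_t$ (as $\epsilon\to 0$, from the previous lemma), closability of the Malliavin derivative delivers $X_t\in\mathbb{D}^{1,2}$. The bound is preserved in the weak-$L^2$ limit of the derivatives, and on the right-hand side the continuity and boundedness of $b'$ together with dominated convergence allow one to replace $X^{n,\epsilon}_s$ by $X_s$ inside the exponential moment.

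\textbf{Main obstacle.} The decisive point is the uniform-in-$\epsilon$ estimate: without the sign $h_n'\le 0$ the exploding prefactor $\epsilon^{-1}$ would propagate into the exponential and destroy any chance of a limit. Once that observation pins the penalisation term down, the remaining work---explicit linear SDE, Novikov on $\sigma'$, closability---is routine, but one must be careful that the Cauchy--Schwarz split keeps the $b'$-exponential \emph{isolated} so that no spurious dependence on $b'$ appears in the constant $K_1$, since this is exactly what allows Step 3 of the main theorem to handle general measurable $b$.
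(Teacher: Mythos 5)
Your proposal is correct and follows essentially the same route as the paper: solve the linear SDE for $D_\theta X^{n,\epsilon}_t$ explicitly, use $h_n'\le 0$ to discard the $\epsilon^{-1}$ penalisation term, isolate the $b'$-exponential by Cauchy--Schwarz while controlling the $\sigma'$ stochastic exponential by a constant depending only on $\|\sigma\|_{C^1_b}$, and pass to the limit via closability of $D$. Your Step B is in fact slightly more explicit than the paper's (which invokes a bound on $E[\exp\{\int\psi\,dB\}]$ directly rather than the Dol\'eans--Dade factorisation), but the argument is the same.
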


\begin{proof}
We observe that $X_t^{n, \epsilon} \in \mathbb{D}^{1,2}$ and the
Malliavin derivative satisfies
\begin{equation} \label{MalliavinDerivative}
D_{\theta}X_t^{n, \epsilon} = \sigma(X_{\theta}^{n,\epsilon}) + \int_{\theta}^t (b'(X_s^{n,
\epsilon}) + \epsilon^{-1} h_n'(X_s^{n,\epsilon}))
D_{\theta}X_s^{n, \epsilon} ds  + \int_{\theta}^t \sigma'(X_s^{n,\epsilon}) D_{\theta}X_s^{n, \epsilon} dB_s.
\end{equation}
This is a linear SDE which is uniquely solved by
\begin{align*}
D_{\theta}X_t^{n, \epsilon} & = \sigma(X_{\theta}) \exp \left\{ \int_{\theta}^t
b'(X_s^{n, \epsilon}) + \epsilon^{-1} h_n'(X_s^{n, \epsilon})  - \frac{1}{2} \left( \sigma'(X_s^{n, \epsilon} \right)^2ds  \right\} \\
& \times \exp \{ \int_{\theta}^t \sigma'(X_s^{n, \epsilon}) dB_s \} \\
&  \leq \exp \{ \int_{\theta}^t b'(X_s^{n, \epsilon})  ds \} \exp \{ \int_{\theta}^t \sigma'(X_s^{n, \epsilon}) dB_s \}  \\
\end{align*}
since $h_n'$ is negative.

Using that for a bounded adapted process $\{ \psi(s) \}_{s \in [0,1]}$ we have
$$
E[ \exp \{ \int_{\theta}^t \psi(s) dB_s \} ] \leq \exp \left\{ \frac{(t- \theta)}{2} \| \psi\|_{\infty}^2 \right\}
$$
and H\"{o}lder's inequality, we get
$$
E[(D_{\theta}X_t^{n,\epsilon})^2] \leq \|\sigma\|_{\infty}^2 \exp \{ c \| \sigma' \|^2_{\infty} \} \left(E[ \exp \{ 4\int_{\theta}^t b'(X_s^{n, \epsilon})ds \}] \right)^{1/2}.
$$

Letting first $n$ go to infinity and $\epsilon$ tend to zero we
get the result.

\end{proof}

What is left is to find a bound on $E[ \exp \{ 4\int_{\theta}^t b'(X_s^{n, \epsilon})ds \}]$ that is depending only on $\|b\|_{\infty}$.

The following Proposition is based on Proposition 3 in \cite{Davie2011}.

\begin{prop} \label{DavieEstimate}
There exists a constant $C$ such that for every positive integer $k$ we have
$$
E \left( \int_{\theta}^t b'(X_s)ds \right)^k \leq \frac{C^k \|b\|_{\infty}^k |t - \theta|^{k/2} k!}{\Gamma( \frac{k}{2}+1)}
$$
for all $b \in C_c^{\infty}((0, \infty))$.
\end{prop}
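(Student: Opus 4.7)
The plan is to adapt Davie's argument (the cited Proposition~3 of \cite{Davie2011}) to the reflected setting, exploiting the fact that since $b$ is supported away from $0$ the boundary of the state space plays no direct role. The proof proceeds in three steps: expand by Fubini into an ordered simplex integral, bound the multi-point expectation by a product of the form $\prod_i (s_i-s_{i-1})^{-1/2}$, and compute the remaining time integral.

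The expansion is
\begin{equation*}
E\!\left[\!\left(\int_{\theta}^{t} b'(X_s)\,ds\right)^{k}\right] = k!\int_{\theta < s_1 < \cdots < s_k < t} E\!\left[\prod_{i=1}^{k} b'(X_{s_i})\right] ds_1\cdots ds_k,
\end{equation*}
and the heart of the argument is the pointwise bound (with $s_0:=\theta$)
\begin{equation*}
\left| E\!\left[\prod_{i=1}^{k} b'(X_{s_i})\right] \right| \;\leq\; A^{k}\|b\|_{\infty}^{k}\prod_{i=1}^{k}(s_i-s_{i-1})^{-1/2}.
\end{equation*}
By the Markov property this expectation equals the iterated integral of $\prod_i b'(y_i)\,p_{s_i-s_{i-1}}(y_{i-1},y_i)$ over $\mathbb{R}_+^k$ against the transition density $p_s$ of the reflected diffusion (with $y_0=x$). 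Because $b\in C_c^{\infty}((0,\infty))$ vanishes at $0$ and at infinity, one may integrate by parts in every $y_i$ with no boundary contribution, transferring the derivative from $b'$ onto the product of transition kernels. Classical Aronson-type bounds for reflected diffusions give $p_s(y,z)\le C s^{-1/2}e^{-c(y-z)^{2}/s}$ and analogous bounds for the first spatial derivatives (each derivative costing an additional factor $s^{-1/2}$); integrating out the Gaussians variable by variable produces a net factor of $(s_i-s_{i-1})^{-1/2}$ per block, and combining with $|b(y_i)|\le\|b\|_{\infty}$ gives the claim.

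The time integral over the ordered simplex is classical. Setting $u_i = s_i-s_{i-1}$ and applying Dirichlet's integral formula,
\begin{equation*}
\int_{\theta<s_1<\cdots<s_k<t}\prod_{i=1}^{k}(s_i-s_{i-1})^{-1/2}\,ds = \frac{\Gamma(1/2)^{k}(t-\theta)^{k/2}}{\Gamma(k/2+1)},
\end{equation*}
which delivers the denominator $\Gamma(k/2+1)$ and, combined with the pointwise bound, completes the proof after absorbing $\Gamma(1/2)^{k}$ into $C^{k}$.

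The main obstacle is producing the heat-kernel bounds with constants that do not blow up in $\|b\|_{\infty}$ as we quantify over all $b\in C_c^{\infty}((0,\infty))$; a naive Aronson estimate for an SDE with drift carries a factor like $\exp(C\|b\|_{\infty}^{2})$. The standard remedy is to apply Girsanov's theorem to reduce to the driftless reflected process $dX=\sigma(X)\,dB+dL$, whose transition density enjoys $b$-independent estimates, and to control the resulting Girsanov martingale afterwards via Novikov's condition.
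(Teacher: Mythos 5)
Your overall architecture (Fubini onto the ordered simplex, integration by parts in each spatial variable, Dirichlet's formula for $\int_{\theta<s_1<\cdots<s_k<t}\prod_i(s_i-s_{i-1})^{-1/2}\,ds$) matches the paper's sketch and Davie's original argument. But the central estimate you assert, namely
$$
\Bigl| E\Bigl[\prod_{i=1}^{k} b'(X_{s_i})\Bigr] \Bigr| \;\leq\; A^{k}\|b\|_{\infty}^{k}\prod_{i=1}^{k}(s_i-s_{i-1})^{-1/2},
$$
cannot be obtained by the method you describe, and this is a genuine gap rather than a technicality. After integrating by parts in all $k$ variables, the derivative $\partial_{y_i}$ falls by the product rule either on the incoming kernel $P(s_i-s_{i-1},y_{i-1},y_i)$ or on the outgoing one $P(s_{i+1}-s_i,y_i,y_{i+1})$, producing $2^{k}$ terms in which an individual kernel may carry \emph{two} derivatives, $\partial_x\partial_y P(u,\cdot,\cdot)$. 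The Aronson-type bound for that object is $Cu^{-3/2}e^{-c(x-y)^2/u}$, so integrating out the Gaussian costs $u^{-1}$ for that block, not $u^{-1/2}$. Already at $k=2$ the cross term $\int\!\!\int b(y_1)b(y_2)\,P(u_1,x,y_1)\,\partial_{y_1}\partial_{y_2}P(u_2,y_1,y_2)\,dy_1dy_2$ is bounded in absolute value only by $Cu_2^{-1}$, and $\int_{\theta<s_1<s_2<t}(s_2-s_1)^{-1}\,ds$ diverges. So the absolute-value, block-by-block argument does not close.

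This is precisely why the paper (following Davie) does not use a pointwise bound: the doubly-differentiated kernels must be handled through cancellation. The paper's Lemma on the operator $Th(s,y)=\int_s^1\int_{\mathbb{R}_+}\partial_x\partial_y P(t-s,y,z)h(t,z)\,dz\,dt$ being bounded on $L^2([0,1]\times\mathbb{R}_+)$ is the substitute: it is a Calder\'on--Zygmund statement proved via a $T(1)$ theorem on a space of homogeneous type, and it relies essentially on the moment condition $\int\partial_x\partial_y P(t,x,y)\,dy=\int\partial_x\partial_y P(t,x,y)\,dx=0$ together with the H\"older continuity of the kernel. Without importing that $L^2$ operator bound (or some equivalent exploitation of the cancellation), your proof does not go through. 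Your closing remark about Girsanov is also not how the $b$-independence of the constants is achieved here; the estimates are run directly on the fundamental solution of the reflected operator with drift, and the dependence on $b$ enters only through $\|b\|_{\infty}$ after the derivatives have been moved off $b'$.
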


Let us explain briefly the idea of the proof. Using the Markov property we can write the above left-hand-side as
$$
\int_{\theta < t_1 < \dots < t_k < t} \int_{\mathbb{R}_+^k} \prod_{j=1}^k b'(z_j)P(t_j-t_{j-1}, z_j,z_{j-1})  dz_k \dots dz_1 dt_1 \dots dt_k ,
$$
where $P$ is the transition density of (\ref{mainSDE}). Then use integration by parts to move the derivatives onto the density function. Then one can show the result by using estimates on $P$ and its derivatives.

Let us remark that the proof of Proposition \ref{DavieEstimate} is the same as the proof of Proposition 3 in \cite{Davie2011} when we replace Lemma 1 in \cite{Davie2011} by the following:

\begin{lemma} \label{boundedOperator}
The operator $T : L^2([0,1] \times \mathbb{R}_+)  \rightarrow L^2([0,1] \times \mathbb{R}_+)$ defined by
$$
Th(s,y) = \int_s^1 \int_{\mathbb{R}_+} \partial_x \partial_y P(t-s, y,z)  h(t,z) dz dt
$$
is bounded.
\end{lemma}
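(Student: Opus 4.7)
The plan is to use the structure of the transition density $P$ of the reflected equation, which by the method of images (combined with a Girsanov reduction to a zero-drift reference density if needed) can be written as the standard half-line Gaussian kernel
\[
P(t,y,z) \;=\; p(t,y,z) + p(t,y,-z), \qquad p(t,u,v) = \frac{1}{\sigma(v)\sqrt{2\pi t}}\exp\!\Big(-\tfrac{|\Phi(u)-\Phi(v)|^2}{2t}\Big)
\]
plus remainder terms controlled by Aronson-type upper bounds. In any case the mixed second derivative $\partial_x\partial_y P(\tau,y,z)$ is of the size $\tau^{-3/2}\exp(-c|y-z|^2/\tau)$ (plus a mirror term in $y+z$), which is an $L^1$ kernel in $z$ of total mass $C/\tau$. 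Since $\int_0^1 1/\tau\, d\tau$ diverges, one cannot afford a crude $L^1$-in-time bound and the operator $T$ is genuinely singular; what saves us is that the kernel acts as a Fourier multiplier of the form $\xi^2 e^{-c\xi^2\tau}$, which has time-uniform $L^1$ norm.

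First I would extend everything to the full line by the reflection $h(t,z)\mapsto h(t,|z|)$, so that $T$ becomes a convolution in $y$ with kernel $\partial_y^2 p(\tau,\cdot)$ (plus a term treated by an identical argument involving the image point). Then I would take the Fourier transform in the spatial variable, which converts $T$ into the one-parameter family of scalar operators
\[
\widehat{Th}(s,\xi) \;=\; -\int_s^1 \xi^2\, e^{-\xi^2 (t-s)/2}\, \hat h(t,\xi)\, dt, \qquad \xi\in\mathbb{R}.
\]
For each fixed $\xi$ this is a Volterra convolution $K_\xi * \hat h(\cdot,\xi)$ on $[0,1]$ with kernel $K_\xi(\tau)=\xi^2 e^{-\xi^2 \tau/2}\mathbf{1}_{\tau>0}$.

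The key computation is then the uniform $L^1$ bound
\[
\|K_\xi\|_{L^1(0,1)} \;=\; \int_0^1 \xi^2 e^{-\xi^2\tau/2}\, d\tau \;=\; 2\bigl(1-e^{-\xi^2/2}\bigr) \;\le\; 2
\]
independent of $\xi$. By Young's inequality we get $\|K_\xi * \hat h(\cdot,\xi)\|_{L^2(0,1)} \le 2\,\|\hat h(\cdot,\xi)\|_{L^2(0,1)}$, and integrating in $\xi$ together with Plancherel gives $\|Th\|_{L^2([0,1]\times\mathbb{R}_+)} \le C\,\|h\|_{L^2([0,1]\times\mathbb{R}_+)}$, which is the claim.

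The main obstacle is justifying the reduction to the convolution structure when the density $P$ is not exactly that of a reflecting Brownian motion but of the reflected SDE with drift $b$ and diffusion $\sigma$. One route is a Girsanov/Zvonkin change of measure to remove $b$, after which the diffusion piece with $\sigma\in C^1_b$ bounded below can be handled by a time-change or by expressing the density in terms of the corresponding scale function, reducing the problem to the reflecting Brownian kernel. The alternative route is to replace the explicit Fourier argument by the analogous bound directly for a general kernel satisfying the Aronson-type estimate $|\partial_x\partial_y P(\tau,y,z)|\le C\tau^{-3/2}e^{-c(y-z)^2/\tau}$; one extracts from this a Gaussian-in-space majorant whose Fourier transform in $y$ decays like $\xi^2 e^{-c\xi^2\tau}$, so the same uniform $L^1$-in-$\tau$ bound survives and the Young/Plancherel argument applies verbatim.
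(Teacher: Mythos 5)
Your core computation is the right heuristic: for the exact (reflected) Gaussian kernel, the spatial Fourier transform turns $T$ into the multiplier family $\xi^2 e^{-\xi^2\tau/2}$, whose $L^1$-norm in $\tau$ is bounded uniformly in $\xi$, and Young plus Plancherel give $L^2$-boundedness. The gap is that neither of your routes back to the actual kernel $P$ closes. Route B is where it breaks: replacing $\partial_x\partial_y P$ by a Gaussian majorant $C\tau^{-3/2}e^{-c(y-z)^2/\tau}$ destroys exactly the cancellation that produces the factor $\xi^2$. The spatial Fourier transform of that majorant is of order $\tau^{-1}e^{-\xi^2\tau/(4c)}$ (its value at $\xi=0$ is its total mass, of size $\tau^{-1}$), so $\int_0^1(\cdot)\,d\tau$ diverges logarithmically for every $\xi$ --- this is precisely the ``crude $L^1$-in-time bound'' you correctly identified as unaffordable at the outset. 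No size estimate alone can yield the lemma; the vanishing-moment structure of the kernel must be retained. Route A does not rescue this: Girsanov changes the law of the process, not the integral operator built from the original transition density, and a scale-function/Lamperti change of variables to normalize $\sigma$ reintroduces a drift, so the Neumann fundamental solution of $\partial_t u = b\partial_x u + \frac{1}{2}\sigma^2\partial_x^2 u$ is genuinely not a convolution kernel in $y-z$ (plus an image term), and the Fourier diagonalization is unavailable.

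The paper stays in physical space: it records Calder\'{o}n--Zygmund estimates for $\partial_x\partial_y P$ with respect to the parabolic quasi-metric $d(t,x)=\sqrt{t}+|x|$ --- the size bound $|\partial_x\partial_y P|\le d(t-s,x-y)^{-3}$, a H\"{o}lder regularity estimate, and crucially the cancellation $\int_{\mathbb{R}}\partial_x\partial_y P(t,x,y)\,dy=\int_{\mathbb{R}}\partial_x\partial_y P(t,x,y)\,dx=0$ --- and then invokes a $T(1)$ theorem on this space of homogeneous type. Those cancellation identities are the variable-coefficient substitute for your $\xi^2$ factor. If you want to salvage the Fourier argument, the honest repair is a parametrix decomposition $P=P_0+R$: treat the frozen-coefficient Gaussian $P_0$ by your multiplier computation, and show $\partial_x\partial_y R$ has an integrable singularity in time (so the crude bound does suffice for the remainder); but that is an additional argument you have not supplied, and its uniformity in $b$, which is needed for Proposition \ref{DavieEstimate}, would still have to be checked.
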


We note that $P(t,x,y)$ the fundamental solution to
$$
\partial_t u = b \partial_x u + \frac{1}{2} \sigma^2 \partial^2_x u, \, \, \, \partial_x u |_{t=0} = 0 .
$$

Lemma \ref{boundedOperator} follows from a 'T(1) theorem on spaces of homogeneous type' using the Schauder estimates obtained in the following lemma:

\begin{lemma}
We equip $[0,1] \times \mathbb{R}$ with the parabolic metric $d(t,x) = \sqrt{t} + |x|$. 
There exists constants $C$, $c >0$ such that we have
\begin{itemize}

\item
$
|P(t,x,y)| \leq Ct^{-1/2} \exp\{ \frac{ -c(x-y)^2}{t} \}
$

\item

$
| \partial_x \partial_y P(t,x,y) | \leq C t^{-3/2} \exp \{ \frac{-c(x-y)^2}{t} \}
$

\item

$
| \partial_x \partial_y P(t,x,y) | \leq d(t,x-y)^{-3}
$

\item
$
| \partial_x \partial_y P(t-s,x,y) - \partial_x \partial_y P(t' - s, x,y') | \leq C \frac{d(t-t', y-y')^{\delta}}{d(t-s,x-y)^{3 + \delta}}
$
for some $\delta > 0$, whenever $\frac{d(t-t', y-y')}{d(t-s,x-y)} < \frac{1}{2}$.

\item
$
\int_{\mathbb{R}} \partial_x \partial_y P(t,x,y) dy = \int_{\mathbb{R}} \partial_x \partial_y P(t,x,y) dx = 0
$
\end{itemize}

\end{lemma}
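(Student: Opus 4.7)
The plan is to appeal to classical parabolic PDE theory for the operator $L = b\partial_x + \tfrac12\sigma^2\partial_x^2$. Because we are in the setting of Proposition \ref{DavieEstimate}, where $b \in C_c^\infty$ and $\sigma \in C_b^1$ is bounded away from zero, $L$ is uniformly parabolic with smooth, bounded coefficients, so a smooth fundamental solution $P(t,x,y)$ exists and admits the parametrix expansion $P = Z + Z \ast \Phi$, where $Z$ is the Gaussian kernel associated to the constant-coefficient operator frozen at $y$ and $\Phi$ solves a Volterra integral equation (see Friedman, \emph{Partial Differential Equations of Parabolic Type}, Chs.\ 1 and 9). Crucially, the constants appearing below will depend only on $\|b\|_\infty$, $\|\sigma\|_{C_b^1}$ and the ellipticity constant $\delta$, not on higher derivatives of $b$; this is exactly what is needed in order to feed the estimates back into Proposition \ref{DavieEstimate}.

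First I would invoke the standard Aronson/parametrix Gaussian bounds
\[ |\partial_x^\alpha \partial_y^\beta P(t,x,y)| \leq C\, t^{-(1+\alpha+\beta)/2}\exp\!\left\{-\frac{c(x-y)^2}{t}\right\} \]
for $\alpha + \beta \leq 2$, which give (i) with $\alpha=\beta=0$ and (ii) with $\alpha=\beta=1$. Bound (iii) is then a direct corollary of (ii) by splitting into the regions $|x-y|^2 \leq t$ and $|x-y|^2 > t$: in the first, $d(t,x-y)\asymp \sqrt{t}$, so $d^{-3} \asymp t^{-3/2}$ absorbs (ii); in the second, the Gaussian factor decays faster than any polynomial in $d^{-1}$ and easily beats $d^{-3}$. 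For the cancellations (v), I would use that $\int P(t,x,y)\,dy$ is constant in $x$ (conservation of probability) and differentiate under the integral, combined with the decay of $P$ and $\partial_y P$ at infinity, to obtain $\int \partial_x\partial_y P\,dy = 0$; the other identity follows analogously from decay of $P$ in the $x$-variable.

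The H\"older bound (iv) is the technical core. I would establish the additional parametrix estimates
\[ |\partial_y\partial_x\partial_y P(t,x,y)| \ \text{and}\ \sqrt{t}\,|\partial_t\partial_x\partial_y P(t,x,y)| \leq C\, d(t,x-y)^{-4} \]
by one further differentiation of the parametrix, and then deduce (iv) via a mean-value argument along a straight line in the parabolic metric: the hypothesis $d(t-t',y-y') \leq \tfrac12 d(t-s,x-y)$ ensures that $d(\cdot,x-\cdot)$ stays comparable to $d(t-s,x-y)$ along the path, so integrating the derivative estimates yields (iv), in fact with exponent $\delta = 1$ (any smaller $\delta \in (0,1)$ then follows by interpolation). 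The main obstacle is precisely this step, as it requires scale-invariant control in the parabolic metric and careful bookkeeping of how extra derivatives on $P$ are absorbed against the Gaussian tails; the other items reduce quickly to standard parametrix material.
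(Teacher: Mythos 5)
The paper does not actually prove this lemma: it is stated as a package of standard Schauder/Gaussian estimates feeding into the $T(1)$ argument for Lemma \ref{boundedOperator}, so there is no in-paper proof to compare against. Your parametrix/Aronson sketch is the natural route, and you correctly isolate the one point that genuinely matters for the application, namely that every constant must depend only on $\|b\|_\infty$, the ellipticity constant and $\|\sigma\|_{C^1_b}$, never on derivatives of $b$. To actually secure that, you should say explicitly that $b\partial_x$ is treated as a bounded zeroth-order-in-regularity perturbation via Duhamel iteration around the kernel of $\tfrac12\sigma^2\partial_x^2$ (this is what Davie does); the textbook parametrix for the full operator with ``smooth coefficients'' would smuggle $\|b'\|_\infty$ into the constants.

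Three concrete gaps remain. First, $P$ is the fundamental solution on the half-line with the Neumann condition $\partial_x u|_{x=0}=0$ (the operator $T$ acts on $L^2([0,1]\times\mathbb{R}_+)$), so you need a method-of-images correction to the whole-line parametrix; the estimates survive, but the cancellation identities in the last bullet must be rechecked at the boundary, where $\int_0^\infty \partial_x\partial_y P\,dy = -\partial_x P(t,x,0)$ vanishes only because the reflected kernel also satisfies the Neumann condition in $y$. Second, your derivation of the last bullet is off: differentiating $\int P\,dy=1$ in $x$ yields $\int \partial_x P\,dy=0$, not $\int \partial_x\partial_y P\,dy=0$; the correct (and easier) argument is the fundamental theorem of calculus in the integration variable together with Gaussian decay of $\partial_x P$ and $\partial_y P$. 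Third, your claim that the H\"older bound (iv) holds with $\delta=1$ via third-derivative estimates $|\partial_y\partial_x\partial_y P|\lesssim d^{-4}$ overreaches: with $\sigma\in C^1_b$ and constants forbidden from seeing $b'$, the Duhamel series does not deliver a third derivative of $P$ with uniform constants, and the honest conclusion is H\"older continuity of $\partial_x\partial_y P$ with some exponent $\delta\in(0,1)$ tied to the regularity of $\sigma^2$ and to the smoothing gained in each Duhamel iterate. Since the lemma only asserts ``some $\delta>0$,'' that weaker statement is both sufficient and what the method actually yields; the mean-value-along-a-parabolic-path device should be replaced by the standard interpolation between the size bound $d^{-3}$ and the increment bound coming from the parametrix term plus a separate H\"older estimate on the Duhamel remainder.
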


Combining Poposition \ref{DavieEstimate} and Lemma \ref{firstEstimateLemma} we are able finish step 1:

\begin{prop} \label{mainEstimate}
There exists a continuous function 
$C_{\delta}: \mathbb{R}^2_+ \rightarrow \mathbb{R}_+$ increasing in both variables such that
$$
E[(D_{\theta}X_t)^2] \leq C_{\delta}(\|b\|_{\infty}, \|\sigma\|_{C^2_b}) .
$$
Moreover, $C_{\delta}$ is independent of $t$ and $\theta$.

\end{prop}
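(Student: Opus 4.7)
The plan is to bootstrap Lemma~\ref{firstEstimateLemma} using the moment estimates of Proposition~\ref{DavieEstimate} via a Taylor expansion of the exponential. By Lemma~\ref{firstEstimateLemma} it suffices to control
$$
E\Bigl[\exp\Bigl\{4\int_\theta^t b'(X_s)\,ds\Bigr\}\Bigr]
$$
by a quantity depending only on $\|b\|_\infty$ and on $\sigma$ (through constants appearing in Proposition~\ref{DavieEstimate}), uniformly in $\theta, t \in [0,1]$. I would expand the exponential in its power series and apply Tonelli to interchange summation and expectation, after first dominating by the series for $|b'|$-free absolute moments if needed; alternatively, since we are in step 2 with $b \in C_b^1$, $b'$ is bounded and the interchange is immediate by dominated convergence using a deterministic bound $\exp(4\|b'\|_\infty)$.

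Then the Davie-type bound from Proposition~\ref{DavieEstimate} gives, after the factorial cancellation,
$$
E\Bigl[\exp\Bigl\{4\int_\theta^t b'(X_s)\,ds\Bigr\}\Bigr] \;\leq\; \sum_{k=0}^{\infty} \frac{(4C\|b\|_\infty)^k (t-\theta)^{k/2}}{\Gamma(k/2+1)},
$$
where the constant $C$ depends on $\sigma$ through the Schauder estimates underlying Proposition~\ref{DavieEstimate}. Since $\Gamma(k/2+1)$ grows like $\sqrt{k!}$ by Stirling, this is (up to constants) a Mittag--Leffler function $E_{1/2}(4C\|b\|_\infty\sqrt{t-\theta})$, which is entire. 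Using $|t-\theta|\leq 1$, the right-hand side is bounded by a continuous increasing function of $\|b\|_\infty$ and of the $\sigma$-dependent constant $C$. Taking the square root and multiplying by $K_1(\|\sigma\|_{C_b^1})$ from Lemma~\ref{firstEstimateLemma} yields the desired $C_\delta(\|b\|_\infty,\|\sigma\|_{C_b^2})$, with independence of $t,\theta$ inherited from the uniform bound $|t-\theta|\leq 1$.

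The main delicate point is that Proposition~\ref{DavieEstimate} is stated for $b\in C_c^\infty((0,\infty))$, while in step~2 we have $b\in C_b^1(\mathbb{R})$ with $b(0)=0$. I would handle this by approximating $b$ in $C_b^1$ by a sequence $b_k\in C_c^\infty((0,\infty))$ with $\|b_k\|_\infty\leq \|b\|_\infty$, noting that the resulting approximating solutions converge in $L^2(\Omega)$ (by standard arguments) and that the Malliavin derivative estimate obtained is uniform in $k$, so it passes to the limit by lower semicontinuity of the $\mathbb{D}^{1,2}$ norm. The other mild subtlety is that the Taylor expansion is term-by-term valid for signed $b'$ only after interchanging sum and expectation, but since $|b'|\leq \|b'\|_\infty$ (in this step), the finite deterministic bound $\exp(4\|b'\|_\infty|t-\theta|)$ dominates every partial sum and permits the interchange; crucially the \emph{limit} bound we obtain no longer involves $\|b'\|_\infty$, which is precisely what is needed in step~3.
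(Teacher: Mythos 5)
Your proposal is correct and is precisely the argument the paper intends: the paper omits the proof entirely, saying only that the proposition follows by ``combining'' Proposition~\ref{DavieEstimate} and Lemma~\ref{firstEstimateLemma}, and the power-series expansion of the exponential moment with the $\Gamma(k/2+1)$ denominator yielding a Mittag--Leffler-type bound is the standard way to carry that out. Your attention to the interchange of sum and expectation and to the mismatch between $b\in C_c^\infty((0,\infty))$ and the step-2 class is appropriate and consistent with the paper's own remark that the Lipschitz case follows by approximation.
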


\begin{remark}

By approximation, one can get the same estimate as in Proposition \ref{mainEstimate} when assuming that $b$ is Lipschitz continuous. 

\end{remark}

We now turn to step 3 of our proof, which is concludes the proof of Theorem \ref{MalliavinThm}.

\bigskip

\textbf{ Step 3 }

\begin{proof}[Proof of Theorem \ref{MalliavinThm}]

Assume $b : \mathbb{R} \rightarrow \mathbb{R}$ is bounded and measurable. 
Choose a function $\psi \in C^{\infty}$ such that 
$$
\psi(y) = 
\left\{  \begin{array}{ll}
1 &  \textrm{ if } y \geq 1\\
0 &  \textrm{ if } y \leq 0 \\
\end{array} \right. .
$$
For $n \in \mathbb{N}$ we define $\psi_n^0(y) = \psi(ny)$, $\psi^1_n(y) = 1 - \psi(n^{-1}y - n)$
and $\psi_n(y) = \psi_n^0(y) + \psi_n^1(y)$. It is readily checked that $\psi_n$ is smooth and has compact support. Moreover, $\psi_n(0) =0$ and 

$$
\lim_{n \rightarrow \infty} \psi_n(y) =
\left\{  \begin{array}{ll}
1 &  \textrm{ if } y > 0\\
0 &  \textrm{ if } y \leq 0 \\
\end{array} \right. .
$$

Define $b_j(y) := \int \rho_j(y-z) b(z)dz \psi_n(y)$, and let
$$
b_{n,k} := \bigwedge_{j=1}^k b_j,
$$
and 
$$
\hat{b}_n = \bigwedge_{j=n}^{\infty} b_j .
$$

Then $b_{n,k}$ is Lipschitz continuous, $b_{n,k}(0) = 0$, $\hat{b}_n$, $b_{n,k}$ are uniformly bounded and we have
$$
b_{n,k} \geq b_{n,k+1} \geq \dots \rightarrow \hat{b}_n, \, \, \textrm{ as } k \rightarrow \infty, 
$$
and
$$
\hat{b}_n \leq \hat{b}_{n+1} \rightarrow b \, \, \textrm{ as } n \rightarrow \infty
$$
almost surely with respect to Lebesgue measure.

Using the comparison theorem for SDE's one can show that for the corresponding sequences of solutions, denoted $(X^{n,k}, L^{n,k})$ and $(X^n, L^n)$ , we have the following convergence in $L^2(\Omega)$:
$$
(X_t^n,L^n_t) = \lim_{k \rightarrow \infty} (X_t^{n,k}, L_t^{n,k}) \, \, \textrm{ uniformly in } t
$$
and 
$$
(X_t,L_t) = \lim_{n \rightarrow \infty} (X_t^{n}, L_t^{n}) \, \, \textrm{ uniformly in } t
$$
where $(X,L)$ is a solution to (\ref{mainSDE}). Details can be found in \cite{Tusheng94}.

By Proposition \ref{mainEstimate} we have $\sup_{n,k \geq 1} \| X_t^{n,k}\|_{1,2} < \infty$. The result follows.

\end{proof}

\section{Spatial Regularity} \label{spatialRegularity}

In this section we want to emphazise that the equation (\ref{mainSDE}) depends on the initial value $x \geq 0$. We write $X_t(x)$ for the unique strong solution.

\begin{prop} \label{WeakDifferentiability}

The solution to (\ref{mainSDE}) is locally weakly differentiable in the sense that for a bounded, open subset $U \subset \mathbb{R}_+$ and any $p > 1$ we have
$$
X_t(\cdot) \in L^2(\Omega ; W^{1,p}(U)) .
$$

\end{prop}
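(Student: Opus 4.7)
The plan is to adapt the three-step strategy of Theorem \ref{MalliavinThm} to the spatial derivative. As in step 3, we approximate $b$ by smooth drifts $b_{n,k}$ with $b_{n,k}(0)=0$, and combine with the penalization $\epsilon^{-1} h_n$ of step 1 to get smooth-in-$x$ approximations $X^{n,k,\epsilon}_t(x)$. For smooth coefficients the SDE flow is differentiable in the initial datum, and the derivative $Z^{n,k,\epsilon}_t(x) := \partial_x X^{n,k,\epsilon}_t(x)$ solves a linear SDE of the same form as (\ref{MalliavinDerivative}), with the initial factor $\sigma(X^{n,\epsilon}_{\theta})$ replaced by the constant $1$ and time running from $0$. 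Solving explicitly and dropping the non-positive penalization term exactly as in the proof of Lemma \ref{firstEstimateLemma} gives
$$|Z^{n,k,\epsilon}_t(x)|^p \leq \exp\Bigl\{ p\int_0^t b_{n,k}'(X^{n,k,\epsilon}_s(x))\, ds \Bigr\} \exp\Bigl\{ p\int_0^t \sigma'(X^{n,k,\epsilon}_s(x))\, dB_s \Bigr\}.$$

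Cauchy-Schwarz and the standard estimate $E[\exp \int \psi\, dB] \leq \exp(\tfrac{1}{2}\|\psi\|_\infty^2)$ then reduce matters to bounding $E[\exp\{2p\int_0^t b_{n,k}'(X^{n,k,\epsilon}_s)\, ds\}]$, which by expanding the exponential as a power series and applying Proposition \ref{DavieEstimate} term by term is controlled by a constant depending only on $p$, $\|b\|_\infty$ and $\|\sigma\|_{C^2_b}$. This yields $\sup_{n,k,\epsilon}\, \sup_{x \in U}\, E[|Z^{n,k,\epsilon}_t(x)|^p] \leq C_p$, and integration over the bounded set $U$ together with the $L^2$-control on $X^{n,k,\epsilon}_t$ itself coming from (\ref{approxSquare}) furnishes the uniform bound $\sup_{n,k,\epsilon} E[\|X^{n,k,\epsilon}_t\|_{W^{1,p}(U)}^2] < \infty$.

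Since $L^2(\Omega; W^{1,p}(U))$ is reflexive for $p>1$, a subsequence converges weakly to some $\tilde X_t$. On the other hand, the pointwise-in-$x$ $L^2(\Omega)$ convergence $X^{n,k,\epsilon}_t(x) \to X_t(x)$ established in the course of Theorem \ref{MalliavinThm} upgrades to $L^2(\Omega \times U)$ convergence by dominated convergence, so the two limits agree. The main obstacle is identifying the weak $x$-derivative of $X_t$ with the weak subsequential limit of the $Z^{n,k,\epsilon}_t$: one must pass to the limit in the integration-by-parts identity
$$\int_U X^{n,k,\epsilon}_t(x)\, \phi'(x)\, dx = -\int_U Z^{n,k,\epsilon}_t(x)\, \phi(x)\, dx$$
for $\phi \in C^\infty_c(U)$, combining strong convergence of $X^{n,k,\epsilon}_t$ with only weak convergence of $Z^{n,k,\epsilon}_t$, and organize the triple limit $\epsilon \to 0$, $k \to \infty$, $n \to \infty$ so that the uniform moment estimate survives at each stage; the use of Proposition \ref{DavieEstimate} for arbitrary moments (not only the $p=2$ case invoked in Lemma \ref{firstEstimateLemma}) is what supplies the headroom needed for these weak-compactness arguments.
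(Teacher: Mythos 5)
Your proposal is correct and follows essentially the same route as the paper: the spatial derivative of the penalized approximation solves the same linear SDE as the Malliavin derivative (with initial datum $1$ and $\theta=0$), the explicit exponential solution combined with Proposition \ref{DavieEstimate} gives a uniform $L^p$ bound independent of $b'$, and the weak derivative of $X_t(\cdot)$ is then identified with the weak subsequential limit of the derivatives via integration by parts against test functions, exactly as in the paper's proof (which additionally tests against $1_A$, $A\in\mathcal{F}$, to handle the $L^2(\Omega)$-valued weak convergence). The only cosmetic difference is that the paper collapses your triple limit by first treating Lipschitz $b$ (the remark after Proposition \ref{mainEstimate}) and then invoking the single-index approximating sequence of Lemma \ref{WeakDifferentiabilityApproximation}.
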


The proof follows the same steps as in the previous section and we just indicate the proof here.

For the first step we assume $b \in C^1_b(\mathbb{R})$, $b(0) =0$, and we consider the approximating sequence of solutions
$$
X^{n, \epsilon}_t(x) = x + \int_0^t b(X^{n,\epsilon}_s(x)) +
\frac{1}{\epsilon} h_n (X_s^{n, \epsilon}(x)) ds+ \int_0^t \sigma(X_s^{n,\epsilon}(x)) dB_s .
$$

Then the solution is in $C^1$ and we have that the spatial derivative satisfies
\begin{align*}
\partial_x X^{n, \epsilon}_t(x) & = 1 + \int_0^t b'(X^{n,\epsilon}_s(x))\partial_x X^{n, \epsilon}_s(x) ds \\ 
& + \int_0^t\frac{1}{\epsilon} h_n' (X_s^{n, \epsilon}(x)) \partial_x X^{n, \epsilon}_s(x)ds+ \int_0^t \sigma'(X_s^{n,\epsilon}(x)) \partial_x X^{n, \epsilon}_s(x)dB_s . \\
\end{align*}

We recognize this equation as the same as (\ref{MalliavinDerivative}) when we let $\theta = 0$. It is then easy to see that the results of Lemma \ref{firstEstimateLemma}, Propositions \ref{DavieEstimate} and \ref{mainEstimate} when we replace the Malliavin derivative by the spatial derivative.

More specifically, we in place of Proposition \ref{mainEstimate} we get that when $b$ is Lipschitz,
$$
\sup_{x \geq 0} E| \partial_x X_t(x)|^p \leq C(\|\sigma\|_{C^1_b}, \|b\|_{\infty}) .
$$

Since $U \subset \mathbb{R}_+$ is bounded we see that $X_t(\cdot) \in L^2(\Omega; W^{1,p}(U))$.

If now $b$ is merely bounded and measurable we use the same method as step 2 in the previous section to conclud:

\begin{lemma} \label{WeakDifferentiabilityApproximation}
There exists a sequence $X_t^k(\cdot)$, bounded in $L^2(\Omega ; W^{1,p}(U)$, such that $X_t^k(x) \rightarrow X_t(x)$ in $L^2(\Omega)$ for all $x \geq 0$.
\end{lemma}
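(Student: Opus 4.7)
The plan is to mimic exactly the Step 3 argument used for the Malliavin derivative, but now on the spatial level, invoking the already established analogue of Proposition \ref{mainEstimate} for $\partial_x X_t$ stated just above the lemma.

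First I would take the very same approximating drifts $b_{n,k}$ (and the diagonal sequence $\hat b_n$) constructed in the proof of Theorem \ref{MalliavinThm}. Recall that each $b_{n,k}$ is Lipschitz continuous, vanishes at $0$, and is bounded uniformly in $n,k$ by $\|b\|_\infty$. Denote by $X_t^{n,k}(x)$ the unique strong solution of the reflected SDE \eqref{mainSDE} with drift $b_{n,k}$ and initial value $x$, and by $X_t^n(x)$ the solution corresponding to $\hat b_n$. From the quoted convergence result of \cite{Tusheng94} we have, for every fixed $x\geq 0$,
\[
X_t^{n,k}(x)\xrightarrow[k\to\infty]{L^2(\Omega)} X_t^n(x),\qquad X_t^n(x)\xrightarrow[n\to\infty]{L^2(\Omega)} X_t(x),
\]
uniformly in $t\in[0,1]$. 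A diagonal extraction then produces a single sequence $X_t^k(x):=X_t^{n_k,m_k}(x)$ with $X_t^k(x)\to X_t(x)$ in $L^2(\Omega)$ for every $x\geq 0$.

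Next I would check the required boundedness in $L^2(\Omega; W^{1,p}(U))$. Because each $b_{n,k}$ is Lipschitz with $b_{n,k}(0)=0$, the remark after Proposition \ref{mainEstimate} applied to the spatial derivative (as explicitly transferred to the present section) yields
\[
\sup_{x\geq 0}E\bigl|\partial_x X_t^{n,k}(x)\bigr|^p\leq C\bigl(\|\sigma\|_{C_b^1},\|b_{n,k}\|_\infty\bigr)\leq C\bigl(\|\sigma\|_{C_b^1},\|b\|_\infty\bigr),
\]
with a constant independent of $n,k$ and of $x$. Since $X_t^{n,k}(\cdot)$ is itself $L^p$ on the bounded set $U$ uniformly in $n,k$ (by the obvious bound $E|X_t^{n,k}(x)|^p\leq C(1+x^p)$ from Gronwall applied to \eqref{approxSquare}), integration over $U$ and then taking expectation produces
\[
\sup_{n,k}E\|X_t^{n,k}(\cdot)\|_{W^{1,p}(U)}^2<\infty,
\]
so the diagonal sequence $X_t^k(\cdot)$ is bounded in $L^2(\Omega; W^{1,p}(U))$, as required.

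The only subtle point is making the two convergences above genuinely take place in $L^2(\Omega)$ for each fixed $x$. This is essentially the content of the convergence statements in the proof of Theorem \ref{MalliavinThm} and of \cite{Tusheng94}: the monotone decreasing/increasing nature of the drift approximations $b_{n,k}$ and $\hat b_n$, combined with the comparison theorem for reflected SDEs, gives monotone (hence pointwise) convergence of $(X^{n,k},L^{n,k})$ to $(X^n,L^n)$ and of $(X^n,L^n)$ to $(X,L)$, and the uniform $L^2$ bounds coming from the Gronwall estimate upgrade this monotone convergence to $L^2$ convergence by dominated convergence. Once this is in hand, the diagonal extraction and the uniform Sobolev bound above complete the proof, and Proposition \ref{WeakDifferentiability} then follows at once because a bounded sequence in the reflexive space $L^2(\Omega; W^{1,p}(U))$ has a weakly convergent subsequence, whose limit must coincide with $X_t(\cdot)$ by the $L^2(\Omega)$-convergence for each $x$.
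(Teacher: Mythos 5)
Your proposal is correct and follows essentially the same route as the paper, which proves this lemma only implicitly by reusing the Step~3 approximation $b_{n,k}$ together with the spatial-derivative analogue of Proposition \ref{mainEstimate} and the convergence results of \cite{Tusheng94}. You merely make explicit a few details the paper leaves tacit (the diagonal extraction and the $L^p(U)$ bound on $X_t^{n,k}(\cdot)$ itself), which is fine.
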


We arrive at the proof of Proposition \ref{WeakDifferentiability}

\begin{proof}[ Proof of \ref{WeakDifferentiability} ]
From Lemma \ref{WeakDifferentiabilityApproximation} we get that there exists a subsequence $\{X_t^{k_j}( \cdot) \}_{ j \geq 1}$ that is converging in the weak topology of $L^2(\Omega; W^{1,p}(U)$ to some element $Y_t$. Since $X_t^{k_j}(x) \rightarrow X_t(x)$, we have for any $A \in \mathcal{F}$ and $\varphi \in C^{\infty}_c(U)$
\begin{align*}
E[ 1_A \int_U \varphi'(x) X_t(x) dx ] & = \lim_{j \rightarrow \infty} E[ 1_A \int_U \varphi'(x) X_t^{k_j}(x) dx ] \\
 &=  - \lim_{j \rightarrow \infty} E[ 1_A \int_U \varphi(x) \partial_x X_t^{k_j}(x) dx ] \\
&=  - E[ 1_A \int_U \varphi(x) \partial_x Y_t(x) dx ] .\\
\end{align*}

It follows that $X_t(\cdot)$ is $P$-a.s. weakly differentiable and it's weak derivative is equal to $\partial_x Y_t(x)$.

\end{proof}

\section{Bismut-Elworthy-Li Formula} \label{bismutFormula}

In this section we study the PDE
\begin{equation} \label{PDE}
\partial_t u(t,x) = b(x) \partial_x u(t,x) + \frac{1}{2} \sigma^2(x) \partial_x^2 u(t,x), \textrm{ for } x \geq 0
\end{equation}

with initial and boundary condition
$$
u(0,x) = u_0(x), \, \, \, \partial_xu(t,0) = 0 .
$$

We shall use the same assumptions on $b$ and $\sigma$ as in Theorem \ref{MalliavinThm} and $u_0 \in C^1_b(\mathbb{R}_+)$.

Existence and uniqueness of a solution to (\ref{PDE}) is already known. More specifically, the solution is given by 
$$
u(t,x) = E[ u_0(X_t(x))]
$$
and lies in $W^{(1,2),p}_{loc}((0,1] \times \mathbb{R}_+)$ - the space of functions which are once weakly differentiable w.r.t $t$ and twice weakly differentiable w.r.t. $x$ and these functions are locally $p$-integrable. Moreover, the solution is in $C([0,1] \times \mathbb{R}_+)$.

In this section, however, we shall prove a Bismut-Elworthy-Li formula for the derivative of the solution to (\ref{PDE}) which does not depend on the derivative of $u_0$.

\begin{thm}
For a bounded subset $U \subset \mathbb{R}_+$ the (weak) spatial derivative of $u$ takes the form
\begin{equation} \label{BismutFormula}
\partial_x u(t,x) = E[ u_0(X_t(x)) t^{-1} \int_0^t \partial_x X_s(x) dB_s ] ,
\end{equation}
for almost every $x \in U$.

\end{thm}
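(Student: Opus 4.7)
My plan is to combine the spatial regularity of $X_t(x)$ from Proposition~\ref{WeakDifferentiability}, the Malliavin differentiability from Theorem~\ref{MalliavinThm}, and the classical Bismut-Elworthy-Li integration-by-parts trick.

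The first step is to identify the weak spatial derivative of $u$. For $\varphi \in C_c^\infty(U)$, Proposition~\ref{WeakDifferentiability} gives $X_t(\cdot) \in L^2(\Omega; W^{1,p}(U))$, so by the Sobolev chain rule (valid because $u_0 \in C_b^1$) one has $\partial_x[u_0(X_t(x))] = u_0'(X_t(x))\partial_x X_t(x)$ pathwise. Testing against $\varphi$ and using Fubini then yields
$$
\partial_x u(t,x) = E\bigl[\,u_0'(X_t(x))\, \partial_x X_t(x)\,\bigr] \quad \text{for a.e.\ } x \in U.
$$

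The next step is to eliminate $u_0'$ via the flow identity
$$
D_\theta X_t(x) = \sigma\bigl(X_\theta(x)\bigr)\, \frac{\partial_x X_t(x)}{\partial_x X_\theta(x)}, \qquad 0 \le \theta \le t.
$$
For the regularised processes $X^{n,\epsilon}$ this is immediate: the ratios $\partial_x X_t^{n,\epsilon}/\partial_x X_\theta^{n,\epsilon}$ and $D_\theta X_t^{n,\epsilon}/\sigma(X_\theta^{n,\epsilon})$ solve the same linear SDE (compare (\ref{MalliavinDerivative}) with the equation for $\partial_x X^{n,\epsilon}$ in Section~\ref{spatialRegularity}), with the same initial value at time $\theta$. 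One passes to the limit along the approximation of Step~3 in the proof of Theorem~\ref{MalliavinThm}, using the uniform bounds of Proposition~\ref{mainEstimate}.

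Combining the flow identity with the Malliavin chain rule $D_\theta u_0(X_t) = u_0'(X_t)\, D_\theta X_t$ gives
$$
u_0'(X_t(x))\,\partial_x X_t(x) = \frac{1}{t}\int_0^t \frac{\partial_x X_\theta(x)}{\sigma(X_\theta(x))}\, D_\theta u_0(X_t(x))\, d\theta.
$$
Taking expectation and applying the Malliavin duality formula with $F = u_0(X_t(x))$ and the adapted integrand $h(\theta) = \partial_x X_\theta(x)/\sigma(X_\theta(x))$ produces (\ref{BismutFormula}). The non-degeneracy hypothesis $|\sigma|\ge \delta$ together with the $L^p$-bounds on $\partial_x X_\theta$ from Section~\ref{spatialRegularity} ensures that $h$ is a square-integrable adapted process, hence lies in the domain of the divergence operator.

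The main obstacle is the flow identity in the limit: both $D_\theta X_t$ and $\partial_x X_t$ are built by a double approximation (smoothing of $b$ plus penalisation of the reflection) rather than as classical pathwise derivatives, so one needs enough uniform integrability along the approximating sequences to pass to the limit in the quotient $\partial_x X_t^{n,\epsilon}/\partial_x X_\theta^{n,\epsilon}$ while keeping the identification $P$-almost surely.
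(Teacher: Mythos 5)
Your strategy (Sobolev chain rule, flow identity, Malliavin duality) is the standard Bismut--Elworthy--Li argument, and the paper does run it --- but only at the level of the Lipschitz-drift approximations $X^k$, where $\partial_x X^k$ and $D_\theta X^k$ are classical solutions of the same linear SDE and the identity is immediate. The genuine gap in your proposal is exactly the point you flag at the end and do not resolve: the flow identity $D_\theta X_t(x)=\sigma(X_\theta(x))\,\partial_x X_t(x)/\partial_x X_\theta(x)$ for the \emph{limiting reflected} process. Two things make this more than a technicality. First, the paper only obtains $\partial_x X^{k}_t(\cdot)\rightharpoonup \partial_x X_t(\cdot)$ \emph{weakly} in $L^2(\Omega;W^{1,p}(U))$; weak convergence does not pass through the nonlinear expressions $\partial_x X^{k}_t/\partial_x X^{k}_\theta$ or $u_0'(X^k_t)\partial_x X^k_t$, so ``passing to the limit in the quotient'' is not available from the estimates at hand, and no strong or a.s.\ convergence of $\partial_x X^k_\theta(x)$ at fixed $x$ is established anywhere. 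Second, for a reflected equation the derivative of the flow degenerates: in the penalized derivative equation the coefficient $\epsilon^{-1}h_n'\le 0$ blows up, and in the limit one expects $\partial_x X_\theta(x)=0$ on the event that the path has reached the boundary before time $\theta$; on that event your quotient is $0/0$, so the identity would have to be formulated in product form and justified separately there.

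The paper's proof is structured precisely to avoid this: it proves $\partial_x u_k(t,x)=t^{-1}E[u_0(X^k_t(x))\int_0^t\partial_x X^k_s(x)\,dB_s]$ for each approximation, then tests against $\varphi\in C^\infty_c(U)$ and passes to the limit by splitting off a term controlled by $\|X^k_t(x)-X_t(x)\|_{L^2(\Omega)}$ times the uniform bound on $\partial_x X^k$, and rewriting the remaining term via the Clark--Ocone representation of $u_0(X_t(x))$ as the \emph{linear} pairing $E[\int_0^t E[D_s u_0(X_t(x))\mid\mathcal{F}_s]\,\partial_x X^k_s(x)\,ds]$, which is continuous under the weak convergence of $\partial_x X^k$. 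To salvage your route you would need to supply strong convergence of the spatial derivatives at a.e.\ fixed $x$, which is an additional (unproved) input. A final remark: with the initial conditions $D_\theta X_\theta=\sigma(X_\theta)$ and $\partial_x X_0=1$, your (correctly stated) flow identity yields the weight $\int_0^t \partial_x X_s(x)\,\sigma(X_s(x))^{-1}\,dB_s$, which differs from (\ref{BismutFormula}) by the factor $\sigma(X_s)^{-1}$; be aware that the paper itself drops this factor when it writes $\partial_x X^k_t=D_sX^k_t\,\partial_x X^k_s$, so your computation, if completed, would actually prove a slightly different formula from the one stated.
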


\begin{proof}
As in the proof of Proposition \ref{WeakDifferentiability} we have a sequence of processes $\{ X_t^k(x) \}$ that are $P$-a.s. differentiable in $x$, $X_t^k (x) \rightarrow X_t(x)$ in $L^2(\Omega)$ and $X_t^k(\cdot) $ converges to $X_t(\cdot)$ in the weak topology of $L^2(\Omega; W^{1,p}(U))$.

We certainly get that 
$$
u_k(t,x) := E[u_0(X_t^k(x))] \rightarrow u(t,x)
$$
as $k \rightarrow \infty$ for every $(t,x) \in [0,1] \times \mathbb{R}_+$. We will now show that $\partial_xu_k(t,\cdot)$ converges weakly to the right-hand-side of (\ref{BismutFormula}), thus proving the assertion.

We start by noting that $\partial_x X^k_t(x) = D_sX^k_t(x) \partial_x X^k_s(x)$, and by the chain-rule for the Malliavin derivative we have
\begin{align*}
u_0'(X_t^k(x)) \partial_x X_t^k(x) & = u_0'(X_t^k(x)) t^{-1}\int_0^t D_sX^k_t(x) \partial_x X^k_s(x) ds \\
 & =t^{-1} \int_0^t D_s ( u_0(X_t^k(x))) \partial_x X_s^k(x) ds .\\
\end{align*}
Taking expecations in the above formula and using the duality between the Malliavin derivative and the It\^{o}-integral we get
\begin{align*}
\partial_x u_k(t,x) & = E[u_0'(X_t^k(x)) \partial_x X_t^k(x)] \\
& = t^{-1} E[ \int_0^t D_s( u_0(X_t^k(x))) \partial_x X_s^k(x) ds ] \\
& = t^{-1} E[ u_0(X_t^k(x)) \int_0^t  \partial_x X_s^k(x) dB_s ] .\\
\end{align*}

For a test function $\varphi \in C^{\infty}_c(U)$ we have
\begin{align*}
\int_U \varphi'(x) u(t,x)dx   &= - \lim_{k \rightarrow \infty}  \int_U \varphi(x) \partial_x u_k(t,x)dx\\
 & = - \lim_{k \rightarrow \infty} \int_U \varphi(x) t^{-1} E[ u_0(X_t^k(x)) \int_0^t  \partial_x X_s^k(x) dB_s ]dx .\\
&  = - \lim_{k \rightarrow \infty} \int_U \varphi(x) t^{-1} E[ (u_0(X_t^k(x))  - u_0(X_t(x))) \int_0^t  \partial_x X_s^k(x) dB_s ]dx .\\
 & \, \,  \, \, \, \, - \lim_{k \rightarrow \infty} \int_U \varphi(x) t^{-1} E[ u_0(X_t(x)) \int_0^t  \partial_x X_s^k(x) dB_s ]dx. \\
\end{align*}

To see that the first term converges to zero, note that for all $x \in U$
\begin{align*}
& E[ (u_0(X_t^k(x))  - u_0(X_t(x))) \int_0^t  \partial_x X_s^k(x) dB_s ] \\ \leq &  \|u'\|_{\infty} \| X_t^k(x) - X_t(x) \|_{L^2(\Omega)} \| \int_0^t \partial_x X_s^k(x) dB_s \|_{L^2(\Omega)} \\
  = & \|u'\|_{\infty} \| X_t^k(x) - X_t(x) \|_{L^2(\Omega)} \left( \int_0^t E[ | \partial_x X_s^k(x) |^2 ] ds \right)^{1/2} \\
\leq & \|u'\|_{\infty} \| X_t^k(x) - X_t(x) \|_{L^2(\Omega)} \left( t \sup_{m,y,s}E[ | \partial_x X_s^m(y) |^2 ]  \right)^{1/2} \\
\end{align*}
which converges to zero as $ k \rightarrow \infty$.

For the second term, notice that since $X_t(x)$ is Malliavin differentiable and $u_0 \in C^1_b(\mathbb{R}_+)$, we have by the Clark-Ocone formula
\begin{equation*} 
u_0(X_t(x)) = E [u_0(X_t(x))] + \int_0^t E [ D_s u_0(X_t(x)) | \mathcal{F}_s ] dB_s
\end{equation*}
and so 
\begin{align*}
\lim_{k \rightarrow \infty} \int_U \varphi(x) t^{-1} E[ u_0(X_t(x)) \int_0^t  \partial_x X_s^k(x) dB_s ]dx   \\
= \lim_{k \rightarrow \infty} \int_U \varphi(x) t^{-1} E[\int_0^t D_s (u_0(X_t(x)))  \partial_x X_s^k(x) ds ]dx   \\
 = \int_U \varphi(x) t^{-1} E[\int_0^t D_s (u_0(X_t(x)))  \partial_x X_s(x) ds ]dx   \\
\end{align*}
where we have used dominated convergence w.r.t $s$ and weak convergence w.r.t. $x$.

We finally note that 
$$
E[\int_0^t D_s (u_0(X_t(x)))  \partial_x X_s(x) ds ] = E[ u_0(X_t(x)) \int_0^t \partial_xX_s(s) dB_s ]
$$
again by the Clark-Ocone formula. 

\end{proof}

\section{Appendix: The Skorohod equation} \label{skorohodEquation}

Given a continuous function $g$ such that $g(0) = 0$ and $x \geq 0$ we are searching for nondecreasing function $\phi \in C([0,1])$ such that if we \emph{ define }
$$
f(t) := x + \phi(t) + g(t),
$$
then $f(t) \geq 0$ for all $t \in C([0,1])$ and $\int_0^1 1_{\{ f(s) > 0\}} d\phi(s) = 0$. We call the pair $(f,\phi)$ a solution to the Skorohod equation if they satisfies the above.

It is well known that such a solution exists and it is uniquely given by
$$
\phi(t) = \max \{ 0, \max_{0 \leq s \leq t} -(x + g(s)) \} .
$$

The topic of this Appendix is however to approximate the solution in a suitable sense.

Let $\epsilon > 0$ and denote by $f^{\epsilon}$ the solution of the following ODE:
\begin{equation} \label{ODE}
f^{\epsilon}(t) = x + \frac{1}{\epsilon}\int_0^t (f^{\epsilon}(s))^- ds + g(t) .
\end{equation}

We have:

\begin{lemma} \label{DeterministicConvergence}
Assume $g \in C^1([0,1])$, $g(0)=0$. As $\epsilon \rightarrow 0$, there exists a subsequence of $(f^{\epsilon}, \epsilon^{-1} \int_0^{\cdot} (f^{\epsilon}(s))^-ds)$  converging uniformly to $(f, \phi)$ - the solution to the Skorohod equation.

\end{lemma}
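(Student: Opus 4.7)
The plan is to establish the stronger statement that the whole family $(f^\epsilon,\phi^\epsilon)$, with $\phi^\epsilon(t):=\epsilon^{-1}\int_0^t(f^\epsilon(s))^-\,ds$, converges uniformly on $[0,1]$ to the Skorohod solution $(f,\phi)$ given by the explicit formula $\phi(t)=\max\{0,\sup_{s\leq t}(-x-g(s))\}$ and $f=x+\phi+g$ (continuity of $\phi$ being read off from this formula). The strategy is two energy estimates on $f^\epsilon$, followed by a two-sided comparison with $(f,\phi)$.

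\emph{A priori estimates.} Because $(\cdot)^-$ is Lipschitz and $g\in C^1$, $f^\epsilon$ is $C^1$ and $(f^\epsilon)'=\epsilon^{-1}(f^\epsilon)^-+g'$. Multiplying by $f^\epsilon$ and using the identity $y(y)^-=-((y)^-)^2\leq 0$ gives $\tfrac{1}{2}\tfrac{d}{dt}(f^\epsilon)^2\leq f^\epsilon g'$, so Gronwall yields $|f^\epsilon(t)|\leq C_0$ uniformly in $\epsilon$ and $t$; hence $\phi^\epsilon=f^\epsilon-x-g$ is uniformly bounded. Next, the chain rule applied to $\psi(y):=\tfrac{1}{2}((y)^-)^2$ (whose derivative is $-(y)^-$) produces
\[
\psi(f^\epsilon(t)) + \frac{1}{\epsilon}\int_0^t ((f^\epsilon)^-)^2\,ds = -\int_0^t (f^\epsilon)^- g'\,ds,
\]
and Young's inequality $|(f^\epsilon)^- g'|\leq\tfrac{1}{2\epsilon}((f^\epsilon)^-)^2+\tfrac{\epsilon}{2}(g')^2$ absorbs half the dissipative term, giving $((f^\epsilon(t))^-)^2\leq\epsilon\|g'\|_{L^2}^2$. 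In particular $\|(f^\epsilon)^-\|_\infty=O(\sqrt{\epsilon})\to 0$.

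\emph{Upper comparison.} The difference $h^\epsilon:=f^\epsilon-f=\phi^\epsilon-\phi$ is continuous of bounded variation, so the chain rule for the $C^1$ function $F(y)=\tfrac{1}{2}(y^+)^2$ (with $F'(y)=y^+$) yields
\[
\tfrac{1}{2}\bigl((h^\epsilon)^+(t)\bigr)^2 = \int_0^t (h^\epsilon)^+\,d\phi^\epsilon - \int_0^t (h^\epsilon)^+\,d\phi.
\]
On $\{(f^\epsilon)^->0\}$ one has $f^\epsilon<0\leq f$, so $(h^\epsilon)^+=0$ and the first integral vanishes; on $\mathrm{supp}(d\phi)\subset\{f=0\}$ one has $(h^\epsilon)^+=(f^\epsilon)^+\geq 0$, so the second integral is non-negative. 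Consequently $(h^\epsilon)^+\equiv 0$, i.e.\ $f^\epsilon\leq f$ and $\phi^\epsilon\leq\phi$ pointwise.

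\emph{Lower comparison and conclusion.} From the smallness estimate, $(f^\epsilon)^-(s)\leq\sqrt{\epsilon}\|g'\|_{L^2}$ for every $s\in[0,1]$, so $\phi^\epsilon(s)=f^\epsilon(s)-x-g(s)\geq -x-g(s)-\sqrt{\epsilon}\|g'\|_{L^2}$. Since $\phi^\epsilon$ is non-decreasing and non-negative, this combines with the explicit Skorohod formula to give $\phi^\epsilon(t)\geq\phi(t)-\sqrt{\epsilon}\|g'\|_{L^2}$. Together with the upper comparison we obtain $\|\phi^\epsilon-\phi\|_\infty\leq\sqrt{\epsilon}\|g'\|_{L^2}$, and since $f^\epsilon-f=\phi^\epsilon-\phi$ also $\|f^\epsilon-f\|_\infty\leq\sqrt{\epsilon}\|g'\|_{L^2}$. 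The main technical delicacy is the justification of the chain rule for $F(y)=\tfrac{1}{2}(y^+)^2$ applied to the continuous BV function $h^\epsilon$ in the upper comparison; this rests on $\phi$ being continuous and non-decreasing (from the explicit Skorohod formula) and on $g\in C^1$, both of which are available under the hypotheses.
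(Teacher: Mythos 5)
Your proof is correct, but it follows a genuinely different route from the paper's. The paper proceeds by compactness: the comparison principle for ODEs gives monotone pointwise convergence of $f^{\epsilon}$ to some limit, a Gronwall bound gives finiteness, the linear ODE satisfied by $(f^{\epsilon})^-$ gives $(f^{\epsilon})^-\leq \|\dot g\|_{\infty}\epsilon$ and hence a uniform bound on $\dot f^{\epsilon}$, weak $L^2$-compactness of the derivatives yields continuity of the limit, Dini's theorem upgrades to uniform convergence along a subsequence, positivity is obtained by contradiction, and finally the Skorohod conditions are verified directly for the limit. You instead identify the limit in advance through the explicit formula $\phi(t)=\max\{0,\sup_{s\leq t}(-x-g(s))\}$ (which the paper records as well known just before the lemma, so it is fair to use), and then squeeze $\phi^{\epsilon}$ between $\phi$ and $\phi-\sqrt{\epsilon}\|g'\|_{L^2}$: the upper bound comes from the BV chain rule applied to $\tfrac12((f^{\epsilon}-f)^+)^2$ together with the support properties of $d\phi^{\epsilon}$ and $d\phi$ (essentially the uniqueness argument for the Skorohod problem), and the lower bound from the energy estimate $\|(f^{\epsilon})^-\|_{\infty}\leq\sqrt{\epsilon}\|g'\|_{L^2}$ combined with monotonicity and nonnegativity of $\phi^{\epsilon}$. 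Your argument buys more than the statement asks for: convergence of the whole family rather than a subsequence, with the explicit rate $\|f^{\epsilon}-f\|_{\infty}\leq\sqrt{\epsilon}\|g'\|_{L^2}$, and it avoids compactness and Dini entirely; the trade-off is that it leans on the explicit Skorohod formula and on the chain rule for continuous functions of bounded variation, whereas the paper's argument reconstructs the Skorohod solution from scratch. (Your $O(\sqrt{\epsilon})$ rate is slightly weaker than the paper's intermediate bound $(f^{\epsilon})^-\leq\|\dot g\|_{\infty}\epsilon$, but that is immaterial here.)
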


\begin{proof}
By the comparison principle for ODE's we have that $f^{\epsilon}$ is pointwise increasing to some function denoted $f$. We have
\begin{align*}
(f^{\epsilon}(t))^2  & = x^2 + \frac{2}{\epsilon} \int_0^t f^{\epsilon}(s) (f^{\epsilon}(s))^- ds + 2\int_0^t f^{\epsilon}(s) \dot{g}(s) ds \\
 & \leq x^2 + \int_0^t (f^{\epsilon}(s))^2 +  (\dot{g}(s))^2 ds \\
& \leq \left( x^2 + \int_0^t (\dot{g}(s))^2 ds \right)e^t \\
\end{align*}
where the last inequality comes from Gronwalls lemma. It follows that $f(t) < \infty$ for all $t$.

Furthermore, we have
\begin{align*}
(f^{\epsilon}(t))^-  & =  \frac{1}{\epsilon} \int_0^t 1_{(f^{\epsilon} \leq 0)}  (f^{\epsilon}(s))^- ds + \int_0^t 1_{(f^{\epsilon} \leq 0)}  \dot{g}(s) ds \\
 & =  \frac{1}{\epsilon} \int_0^t (f^{\epsilon}(s))^- ds + \int_0^t 1_{(f^{\epsilon} \leq 0)}  \dot{g}(s) ds \\
& = \int_0^t 1_{(f^{\epsilon} \leq 0)}  \dot{g}(s) e^{-\epsilon^{-1}(t-s)} ds 
\end{align*}
where the last inequality comes from solving the linear ODE that $(f^{\epsilon}(t))^-$ satisfies. It follows from the above that
$$
(f^{\epsilon}(t))^- \leq \|\dot{g}\|_{\infty} \epsilon .
$$
From (\ref{ODE}) and the above shows that 
$$
\dot{f}^{\epsilon}(t) = \epsilon^{-1} f^{\epsilon}(t) + \dot{g}(s)
$$
is then uniformly bounded by $2 \|\dot{g}\|_{\infty}$. Using the relative compactness of $\dot{f}^{\epsilon}$ in $L^2([0,1])$ with respect to the weak topology we can extract a converging subsequence (still denoted $\dot{f}^{\epsilon}$ for simplicity). Denote the limit by $\tilde{f}$.

Then,
$$
f(t)  = \lim_{\epsilon \rightarrow 0} f^{\epsilon}(t)  = x + \lim_{\epsilon \rightarrow 0} \int_0^1 1_{[0,t]} (s) \dot{f}^{\epsilon}(s) ds  = x +  \int_0^1 1_{[0,t]} (s) \tilde{f}(s) ds
$$
so that $f$ is continuous. It follows from Dini's theorem that the convergence is uniform in $t$.

To see that $f(t)$ is positive assume that there exists $t_0$ such that $f(t_0) < 0$. By continuity we may choose $\delta > 0$ such that $f(t) \leq \frac{f(t_0)}{2}$ for all $t \in (t_0 - \delta, t_0 + \delta)$. Moreover, by the uniform convergence there exists $\epsilon_0 > 0$ such that 
$$
f^{\epsilon}(t) \leq \frac{f(t_0)}{4}, \, \forall t \in (t_0 - \delta, t_0 + \delta)  \textrm{ and } \forall \epsilon < \epsilon_0.
$$
It follows that 
\begin{align*}
f^{\epsilon}(t_0 + \frac{\delta}{2}) - f^{\epsilon}(t_0 - \frac{\delta}{2}) & = \int_{t_0 - \delta/2}^{t_0 + \delta/2} (f^{\epsilon}(s))^- ds + g(t_0 + \frac{\delta}{2}) - g(t_0 - \frac{\delta}{2}) \\
& \geq - \frac{f(t_0) \delta}{4\epsilon} +  g(t_0 + \frac{\delta}{2}) - g(t_0 - \frac{\delta}{2}) \\
& \rightarrow +\infty \\
\end{align*}
as $\epsilon \rightarrow 0$ which contradicts the finiteness of $f$. Consequently, $f(t) \geq 0$ for all $t$.

It is clear from (\ref{ODE}) that also $\epsilon^{-1} \int_0^{\cdot} (f^{\epsilon}(s))^- ds$ is converging in $C([0,1])$, and we denote the limit by $\phi(t)$. Being the limit of a sequence of nondecreasing functions, $\phi$ itself is increasing. Moreover, we have that $\phi$ is constant on $\{ t \in [0,1] | f(t) > 0 \}$. Indeed, assume $f(t) > \gamma > 0$ for all $t \in (a,b) \subset [0,1]$. We may choose $\epsilon_0>0$ such that $f^{\epsilon}(t) \geq \gamma/2>0$ for all $t \in (a,b)$. 

For $r<s \in (a,b)$ we have
$$
\phi(s) - \phi(r) = \lim_{\epsilon \rightarrow 0} \epsilon^{-1} \int_r^s (f^{\epsilon}(u))^- du = 0 ,
$$
the claim follows and we get $\int_0^1 1_{(0, \infty)}(f(s)) d\phi(s)  = 0$.

\end{proof}

We write $f_g$ to emphasize that the above function depends on $g$. We can then get the following

\begin{lemma} \label{DeterministicExtension}
The mapping $g \mapsto f_g$ is can be extended to a Lipschitz-continuous mapping from $C([0,1])$ into itself.
\end{lemma}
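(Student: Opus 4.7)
The plan is to exploit the explicit representation
$$
\phi_g(t)=\max\Bigl\{0,\ \max_{0\le s\le t}-(x+g(s))\Bigr\}, \qquad f_g(t)=x+g(t)+\phi_g(t),
$$
stated at the start of this appendix and identified in Lemma \ref{DeterministicConvergence} with the limit of the penalised ODE \eqref{ODE} when $g\in C^1([0,1])$ with $g(0)=0$. Regarded simply as a formula, the right-hand side above is well-defined for every $g\in C([0,1])$, so the natural candidate for the extension is already at hand; it remains to check Lipschitz continuity and uniqueness.

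First I would establish the bound $\|f_{g_1}-f_{g_2}\|_\infty\le 2\|g_1-g_2\|_\infty$ directly from the formula. Using the elementary inequality
$$
\Bigl|\max_{s\le t}a(s)-\max_{s\le t}b(s)\Bigr|\le\max_{s\le t}|a(s)-b(s)|,
$$
together with the $1$-Lipschitz continuity of $\alpha\mapsto\max\{0,\alpha\}$, applied to $a(s)=-(x+g_1(s))$ and $b(s)=-(x+g_2(s))$, one obtains $|\phi_{g_1}(t)-\phi_{g_2}(t)|\le\|g_1-g_2\|_\infty$ for every $t\in[0,1]$. Combining this with the trivial contribution $|g_1(t)-g_2(t)|$ from the term $g$ in $f_g=x+g+\phi_g$ gives the claimed Lipschitz constant $2$.

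Second, I would justify that this formula really is the Lipschitz extension of the map of Lemma \ref{DeterministicConvergence}. On the subspace $\{g\in C^1([0,1]):g(0)=0\}$ the formula above agrees, by Lemma \ref{DeterministicConvergence}, with the pathwise-constructed solution $f_g$ of the Skorohod problem; and this subspace is dense in $\{g\in C([0,1]):g(0)=0\}$ for the sup-norm (e.g.\ by mollification, followed if necessary by subtracting a tiny linear correction to preserve the condition at $0$). Since a Lipschitz map from a dense subset of a metric space into a complete metric space extends uniquely to the closure with the same constant, the formula gives the unique Lipschitz extension to $\{g\in C([0,1]):g(0)=0\}$, and its restriction provides a $2$-Lipschitz map on all of $C([0,1])$.

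The main step worth isolating is the contractivity of the running-supremum operator $g\mapsto\max_{s\le\cdot}(-g(s))$; everything else is density/bookkeeping. The only subtlety is verifying that the formula coincides on $C^1([0,1])\cap\{g(0)=0\}$ with the map built via Lemma \ref{DeterministicConvergence}, but this is precisely the content of the explicit representation of $\phi$ recorded at the start of the appendix.
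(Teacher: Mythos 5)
Your proof is correct, but it takes a genuinely different route from the paper. You work entirely at the level of the limit object, using the explicit Skorohod representation $\phi_g(t)=\max\{0,\max_{0\le s\le t}-(x+g(s))\}$ (which the paper records as well known at the start of the appendix) together with the $1$-Lipschitz continuity of the running supremum and of $\alpha\mapsto\max\{0,\alpha\}$; this is shorter and more elementary, and it even defines the extension on all of $C([0,1])$ by a single formula rather than by abstract Lipschitz extension from a dense set. The paper instead proves the estimate $\|f_1^{\epsilon}-f_2^{\epsilon}\|_\infty\le 2\|g_1-g_2\|_\infty$ directly for the penalized ODE solutions, \emph{uniformly in} $\epsilon$, by showing that $\bigl(h_1^{\epsilon}-h_2^{\epsilon}-K\bigr)^+$ vanishes via the non-positivity of $(x,y)\mapsto 1_{\{x-y>0\}}\bigl((x)^--(y)^-\bigr)$. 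That stronger, $\epsilon$-uniform bound is not a stylistic choice: it is exactly what is invoked later in the proof of Proposition \ref{SkorohodApproximation}, where the term $\|f^{\epsilon}_{g_\delta}-f^{\epsilon}_g\|_\infty\le 2\|g-g_\delta\|_\infty$ is controlled \emph{before} passing to the limit in $\epsilon$. So your argument proves the lemma as stated, but does not supply the inequality (\ref{ODEbound}) for the approximations themselves; if one adopted your proof, Proposition \ref{SkorohodApproximation} would need either a separate proof of that uniform bound (essentially the paper's computation) or a rearranged argument.
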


\begin{proof}
If we denote by $f_j^{\epsilon}$ the solution to (\ref{ODE}) when we replace $g$ by $g_j \in C^1([0,1])$, $j=1,2$, it is enough to find the uniform bound 
\begin{equation} \label{ODEbound}
\|f_1^{\epsilon} - f_2^{\epsilon} \|_{\infty}  \leq 2 \|g_1 - g_2\|_{\infty} .
\end{equation}

To this end, define the functions 
$$
h_j^{\epsilon} (t) := f_j^{\epsilon}(t) - g_j(t) = x + \int_0^t \left( h_j^{\epsilon}(s) + g_j(s) \right)^- ds .
$$
With $K := \|g_1 - g_2\|_{\infty}$ we have
$$
\left( h_1^{\epsilon}(t) - h_2^{\epsilon}(t) - K \right)^+ 
$$
$$
= \int_0^t 1_{ \left(h_1^{\epsilon}(s) - h_2^{\epsilon}(s) > K \right)} \left(  (h_1^{\epsilon}(s) + g_1(s) )^- - ( h_2^{\epsilon}(s) + g_2(s))^- \right) ds .
$$

We see that the above integrand is negative for all $s$. Indeed, fix $s \in [0,t]$. 
If $(h_1^{\epsilon}(s) + g_1(s) )^- - ( h_2^{\epsilon}(s) + g_2(s))^-$ is negative we are done. 
If $(h_1^{\epsilon}(s) + g_1(s) )^- - ( h_2^{\epsilon}(s) + g_2(s))^-$ is positive, we write
$$
h_1^{\epsilon}(s) - h_2^{\epsilon}(s) \leq h_1^{\epsilon}(s) + g_1(s)  -  h_2^{\epsilon}(s) + g_2(s) + K
$$
so that
$$
1_{ \left(h_1^{\epsilon}(s) - h_2^{\epsilon}(s) > K \right)} \leq 1_{ \left(h_1^{\epsilon}(s) + g_1(s) - h_2^{\epsilon}(s) + g_2(s) > 0 \right)} .
$$
It is then easy to check that the function $(x,y) \mapsto 1_{(x-y) > 0} ((x)^- - (y)^-)$ is always non-positive.

\end{proof}

We are ready to conclude:

\begin{prop} \label{SkorohodApproximation}
Let $g \in C([0,1])$ be such that $g(0)=0$ and $x \geq 0$. Then the solution to 
$$
f_g^{\epsilon}(t) = x + \epsilon^{-1} \int_0^t (f_g^{\epsilon}(s))^- ds + g(t)
$$
converges in $C([0,1])$ as $\epsilon \rightarrow 0$ to the solution to the Skorohod equation.

\end{prop}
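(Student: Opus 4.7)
The plan is to reduce to the $C^1$ case treated in Lemma \ref{DeterministicConvergence} by a density argument, exploiting the uniform-in-$\epsilon$ Lipschitz estimate \eqref{ODEbound} established in the proof of Lemma \ref{DeterministicExtension}. Throughout, let $f_g$ denote the (unique) Skorohod solution associated to data $g$, given explicitly by $f_g(t) = x + g(t) + \phi_g(t)$ with $\phi_g(t) = \max\{0, \max_{0 \leq s \leq t} -(x + g(s))\}$. This explicit formula makes clear that $g \mapsto f_g$ is (Lipschitz) continuous on $C([0,1])$, so in particular $f_{g_n} \to f_g$ uniformly whenever $g_n \to g$ uniformly.

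Choose a sequence $g_n \in C^1([0,1])$ with $g_n(0) = 0$ and $g_n \to g$ uniformly on $[0,1]$ (e.g.\ standard mollification, adjusted so that $g_n(0)=0$). I first upgrade Lemma \ref{DeterministicConvergence} from subsequential to full convergence: for each fixed $n$, every subsequential limit of $\{f_{g_n}^{\epsilon}\}_{\epsilon \to 0}$ solves the Skorohod equation with input $g_n$, and by uniqueness of that equation the limit must equal $f_{g_n}$. Hence the whole family converges, i.e.\ $f_{g_n}^{\epsilon} \to f_{g_n}$ in $C([0,1])$ as $\epsilon \to 0$, for each fixed $n$.

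Now apply the uniform bound \eqref{ODEbound} together with the triangle inequality:
$$
\|f_g^{\epsilon} - f_g\|_{\infty} \leq \|f_g^{\epsilon} - f_{g_n}^{\epsilon}\|_{\infty} + \|f_{g_n}^{\epsilon} - f_{g_n}\|_{\infty} + \|f_{g_n} - f_g\|_{\infty} \leq 2\|g - g_n\|_{\infty} + \|f_{g_n}^{\epsilon} - f_{g_n}\|_{\infty} + \|f_{g_n} - f_g\|_{\infty}.
$$
Given $\eta > 0$, first pick $n$ large enough that the first and third terms on the right are each less than $\eta/3$ (using $g_n \to g$ uniformly for the first, and continuity of the Skorohod map for the third); then, with $n$ fixed, pick $\epsilon$ small enough that the middle term is less than $\eta/3$ by the previous paragraph. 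This yields $f_g^{\epsilon} \to f_g$ uniformly.

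The only mildly subtle point is the passage from the subsequential statement of Lemma \ref{DeterministicConvergence} to full convergence, which is what allows the double approximation (in $n$ and in $\epsilon$) to be decoupled cleanly; the rest is a standard density argument, whose essential input is the uniform-in-$\epsilon$ Lipschitz dependence of $f_g^{\epsilon}$ on $g$ proved in Lemma \ref{DeterministicExtension}.
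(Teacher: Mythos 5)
Your proposal is correct and follows essentially the same route as the paper: approximate $g$ by $C^1$ data, control the two outer terms of the triangle inequality by the uniform-in-$\epsilon$ Lipschitz bound \eqref{ODEbound} (respectively the Lipschitz continuity of the Skorohod map), and invoke Lemma \ref{DeterministicConvergence} for the middle term. The only difference is that you explicitly upgrade the subsequential convergence in Lemma \ref{DeterministicConvergence} to full convergence via uniqueness of the Skorohod solution (equicontinuity from the uniform bound on $\dot f^{\epsilon}$, or alternatively monotonicity plus Dini, supplies the needed compactness), a point the paper passes over silently.
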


\begin{proof}
Let $\delta > 0$. Choose $g_{\delta} \in C^1([0,1])$ such that $\| g -g_{\delta}\|_{\infty} < \delta$. By Lemma \ref{DeterministicConvergence} we can choose $\epsilon_0 > 0$ such that for all $\epsilon \in (0, \epsilon_0)$ we have $ \| f_{g_{\delta}} - f^{\epsilon}_{g_{\delta}} \|_{\infty} < \delta$. By the proof of Lemma \ref{DeterministicExtension} we get
\begin{align*}
\|f_g - f_g^{\epsilon}\|_{\infty} & \leq \| f_g - f_{g_{\delta}}\|_{\infty} + \| f_{g_{\delta}} - f_{g_{\delta}}^{\epsilon} \|_{\infty} + \| f_{g_{\delta}}^{\epsilon} - f_g^{\epsilon} \|_{\infty} \\
& < 2 \|g - g_{\delta}\| + \delta + 2 \|g - g_{\delta}\|_{\infty} = 5 \delta .
\end{align*}
The conditions of the Skorohod equation are easy to check.

\end{proof}

\newpage


\begin{thebibliography}{99}

\bibitem{Davie2011} Alexander M. Davie: \textit{Individual Path Uniqueness of Solutions of Stochastic Differential Equations.} Stochastic Analysis 2010, pp. 213-225 (2011).


\bibitem{FedrizziFlandoli} Fedrizzi, E., Flandoli, F.: \textit{Noise prevents
singularities in linear transport equations.} Journal of Functional Analysis. Volume 264, Issue 6, Pages 1329-1354 (2013).


\bibitem{PMNPZ} Menoukeu-Pamen, O., Meyer-Brandis, T., Nilssen, T., Proske,
F., Zhang, T.:\textit{ A variational approach to the construction and Malliavin
differentiability of strong solutions of SDE's.} Mathematische Annalen, Volume 357, Issue 2, pp 761-799 (2013).


\bibitem{MNP} Mohammed, S.E.A., Nilssen, T. Proske, F.: \textit{ Sobolev
differentiable stochastic flows of SDE's with Singular Coefficients: Applications to the Transport Equation.} The Annals of Probability. Vol. 0, 1-42, (2014).

\bibitem{Tusheng94} Tusheng Zhang: \textit{On the strong solutions of one-dimensional stochastic differential equations with reflecting boundary.} Stochastic Processes and their Applications 50 (1994), p. 135 - 147.

\end{thebibliography}
\end{document}